\documentclass[final,leqno,onefignum]{siamltex704}

\headheight=8pt \topmargin=0pt
\textheight=624pt \textwidth=432pt
\oddsidemargin=18pt \evensidemargin=18pt

\usepackage{fancyhdr}
\usepackage[applemac]{inputenc}
\usepackage[T1]{fontenc}
\usepackage[english]{babel}
\usepackage{amsmath,amssymb,amsfonts,amsopn,mathrsfs,bm}
\usepackage{calc} 
\usepackage{cite}
\usepackage{color}
\usepackage{epsfig,verbatim}
\usepackage{subfigure}
\usepackage{mathtools}
\usepackage{latexsym}
\usepackage{pdfpages}
\usepackage{graphicx}
\usepackage{showkeys}
\usepackage{enumerate}
\usepackage{tikz,xifthen}
\usetikzlibrary{matrix}
\usepackage{hyperref}
\hypersetup{pdfstartview={XYZ null null 1.00}} 
\usepackage[parfill]{parskip}
\usepackage{epstopdf}
\usepackage[font=small,labelfont=bf]{caption}

\newtheorem{assumption}{Assumption}
\newtheorem{remark}{Remark}
\newtheorem{teo}{Theorem}[section]
\newtheorem{prop}[teo]{Proposition}

\newtheorem{coro}[teo]{Corollary}

\newcommand{\RR}{{\mathbb{R}}}
\newcommand{\der}{\partial}
\newcommand{\ep}{\varepsilon}
\newcommand{\epsi}{{\sigma}} 

\newcommand{\om}{\Omega}
\newcommand{\omze}{{\omega_0}}
\newcommand{\omun}{{\omega_1}}
\newcommand{\omti}{{\tilde{\om}}}




\begin{document}

\title{Stable determination of polyhedral interfaces from boundary
  data for the Helmholtz equation} 
\author{
Elena Beretta~\thanks{Dipartimento di Matematica "Brioschi",
   Politecnico di Milano, Milano, Italy
  (\texttt{elena.beretta@polimi.it})}
\and
Maarten V. de Hoop~\thanks{Department of Mathematics, Purdue
  University, West Lafayette, USA (\texttt{mdehoop@purdue.edu})}
\and
Elisa Francini~\thanks{Dipartimento di Matematica e Informatica
  ``U. Dini'', Universit\`{a} di Firenze, Italy
  (\texttt{elisa.francini@unifi.it})}
\and
Sergio Vessella~\thanks{Dipartimento di Matematica e Informatica
  ``U. Dini'', Universit\`{a} di Firenze, Italy
  (\texttt{sergio.vessella@unifi.it})}
}

\date{\today}

\maketitle


\begin{abstract}
We study an inverse boundary value problem for the Helmholtz equation
using the Dirichlet-to-Neumann map. We consider piecewise constant
wave speeds on an unknown tetrahedral partition and prove a
Lipschitz stability estimate in terms of the Hausdorff distance
between partitions.
\end{abstract}

\noindent
\textbf{Keywords.} Inverse boundary value problem, Helmholtz equation,
Lipschitz stability

\noindent
\textbf{MSC: }35R30, 35J08, 35J25

\section{Introduction}
\label{sec:1}

We consider an inverse boundary value problem for the Helmholtz
equation
\[
   \Delta u + \omega^2 q(x)u
       = 0\quad \textrm{in } \Omega\subset\RR^3 ,
\] 
where $q=c^{-2}$ and $c$ is the wavespeed. The data are the
Dirichlet-to-Neumann map and the objective is to recover the
wavespeed. The uniqueness of this inverse problem was established by
Sylvester and Uhlmann \cite{SU} for $q\in L^{\infty}(\Omega)$.
Concerning stability, conditional logarithmic continuous dependence of
the wavespeed on the Dirichlet-to-Neumann map has been proven in
\cite{A} in the case of wavespeeds in $H^s(\Omega)$ with
$s>\frac{3}{2}$. We refer to Novikov \cite{Nov} for a refinement of
this stability estimate. The logarithmic rate of stability is optimal
\cite{M}. For the inverse conductivity problem the authors of
\cite{AV} proposed restricting the class of unknown coefficients to a
finite dimensional set to obtain Lipschitz stability estimates. The
result was extended to complex-valued conductivities in \cite{BF}. In
this finite dimensional setting, in \cite {BdHQ, BdHQS}, a Lipschitz
stability estimate for the recovery of piecewise constant wavespeeds
for a given domain partition from boundary data for the Helmholtz
equation, and an estimate for the stability constant in terms of the
number of domains in the partition, were obtained.

Here, we study the problem of determining the finite partition from
boundary data given a (possibly large) finite set of attainable values
for the wavespeed. Due to the severe nonlinearity of the problem the
derivation of Lipschitz stability estimates is more subtle. For this
reason, we consider a partitoning of the domain with a (regular)
unstructured tetrahedral mesh. In fact, an unstructured tetrahedral
mesh admits a local refinement and, with piecewise constant
wavespeeds, can accurately approximate realistic models in
applications. In geophysics, we mention as an example the work of
R\"{u}ger and Hale \cite{RugerHaleGeophysics:2006}. Here, knowledge of
a set of attainable values for the wavespeed can be motivated by the
general knowledge of relevant rock types. The deformation allows one
to adjust the mesh and recover structures in the models. In
geodynamics, these structures can be an imprint of the local geology
and tectonics \cite{Hilst}. Moreover, one can parametrize major
discontinuities at (polyhedral) surfaces by connecting boundaries of
subdomains in the partition via a segmentation for example.

In this paper, we establish a Lipschitz stability estimate expressed
in terms of the Hausdorff distance between partitions using
tetrahedra from the Dirichlet-to-Neumann map. Lipschitz stability
estimates provide a framework for optimization, specifically,
iterative reconstruction of the wavespeed with a convergence radius
determined by the stability constant \cite{dHQS1, dHQS2}. The recovery
of polyhedral interfaces then becomes a shape optimization. The
analysis in \cite{dHQS1} makes explicit use of a Landweber
iteration. Via successive approximations, and making use of estimates
for the corresponding growth of the stability constant, the
reconstruction can be cast into a multi-level scheme \cite{dHQS2}
effectively enlarging the radius of convergence.
%
%
%
As an important application, we mention so-called time-harmonic full
waveform inversion (FWI) developed in reflection seismology
\cite{Pratt1999, Pratt1998, Pratt2004, Virieux2009} with the goal to
image wavespeed variations in Earth's interior. The data, here, are
essentially the single-layer potential operator. However, stability
estimates for the Dirichlet-to-Neumann map directly carry over to
stability estimates for this operator.

We give an outline of the paper.  We first state the main result and
the main assumptions (Section~\ref{sec:2}). Then we establish a rough
stability estimate for the potentials using complex geometrical optics
(CGO) solutions following the outline of an estimate in Beretta
\textit{et al.} \cite{BdHQS} (Section~\ref{sec:3}). The CGO solutions
were introduced by Sylvester and Uhlmann \cite{SU} in their proof of
uniqueness of this inverse boundary value problem. The CGO solutions
in our analysis differ slightly from theirs to obtain better constants
in the stability estimates as proposed in \cite{S}. We proceed with
establishing the recovery of the number of tetrahedra in the mesh from
the potential, and with expressing the Hausdorff distance between
meshes in terms of the difference of piecewise constant potentials
defined on these meshes. Naturally, the information on the Hausdorff
distance between meshes can be transformed to information on the
vertices of the tetrahedra forming the meshes
(Section~\ref{sec:4}). The main part of the proof of our result
pertains to obtaining a lower bound for the Gateaux derivative of the
Dirichlet-to-Neumann map under mesh deformation (Section~\ref{sec:5}).

\subsection*{Notation}

We use the Fourier transform convention,
\[
   \hat{f}(\xi)
        = \int_{\RR^3} f(x) e^{i x \cdot \xi} dx.
\]
If the function $f$ is defined on a subset of $\RR^3$, it is extended
to $\RR^3$ attaining the value zero. We denote by $\check{f}$ the
inverse Fourier transform of $f$,
\begin{equation}\label{invFT}
   \check{f}(x)
        = \frac{1}{(2\pi)^3} \int_{\RR^3} f(\xi)
              e^{-i x \cdot \xi} d\xi.
\end{equation}
We introduce coordinates, $x = (x^\prime,x_3)$, in $\RR^3$, where
$x^\prime\in\RR^{2}$ and $x_3\in \RR$. We denote the open ball in
$\RR^3$ centered at $x$ of radius $r$ by $B_r(x)$, and the open ball
in $\RR^2$ centered at $x^\prime$ of radius $r$ by
$B_r^\prime(x^\prime)$.

\section{Assumptions and main result}
\label{sec:2}

We let $\om$ be a bounded domain in $\RR^3$ such that $\RR^3 \setminus
\om$ is connected,
\begin{equation}\label{1.1}
   \om \subset B_R(0) \text{ for some }R > 0,
\end{equation}
and
\begin{equation}\label{1.1.5}
   \om \text{ has a Lipschitz boundary
                 with constants }r_0\text{ and }K_0,
\end{equation}
that is, for any point $P \in \der\om$, there exists a rigid
transformation of coordinates under which $P=0$ and
\[
   \om \cap \{(x^\prime,x_3) \in \RR^3 \,:\,
   |x^\prime|< r_0,\, |x_3| < K_0r_0\}
   = \{(x^\prime,x_3)\,:\, |x^\prime| < r_0,\,\,
                        x_3 > \psi(x^\prime)\},
\]
where $\psi$ is a Lipschitz continuous (level set) function in
$B_{r_0}^\prime$ such that
\[
   \psi(0) = 0\text{ and }\,
     \|\nabla\psi\|_{L^{\infty}(B_{r_0}^\prime)} \leq K_0.
\]

We consider the boundary value problem for the Helmholtz equation,
\begin{equation}\label{4.1}
\left\{\begin{array}{rcl}
             \Delta u+\omega^2 q u & = & 0\text{ in }\om, \\
             u & = & \phi \text{ on }\der\om
       \end{array}
\right.
\end{equation}
for $\phi \in H^{1/2}(\der\om)$, and introduce the
Dirichlet-to-Neumann map
\begin{equation}\label{3.6}
   \Lambda_{q}: H^{1/2}\left(\der\om\right)
                       \to H^{-1/2}\left(\der\om\right)
\end{equation}
according to
\begin{equation}\label{4.2}
   \phi \to \Lambda_{q}(\phi) := \left.
   \frac{\der u}{\der \nu}\right|_{\der\om}.
\end{equation}
The normal derivative is defined in the weak sense as
\[
   \left\langle {\frac{\der u}{\der \nu}},
       \psi|_{\der\om} \right\rangle
   = \int_\om (\nabla u \cdot \nabla \psi
                          -\omega^2 q u \psi) d x
\]
for every $\psi \in H^1(\om)$. In the above, $q \in
L^{\infty}(\der\om)$ is identified with $c^{-2}$ where $c$ denotes the
wavespeed. The solution of (\ref{4.1}) exists in
$H^{1}\left(\om\right)$ and is unique if $\omega$ is not in the
Dirichlet spectrum of $q^{-1} \Delta$ on $\om$.

We introduce $\omze$, $\omun$ such that $0 < \omze < \omun$ and
\begin{equation}\label{3.5}
   \omun \leq \sqrt{\frac{\lambda_1(B_R)}{2Q_0}},
\end{equation}
where $\lambda_1(B_R)$ is the first eigenvalue of $-\Delta$ on
$B_R$. We recall that $\lambda_1(B_R) = \lambda_1(B_1)R^{-2}$. (If we
detect the spectrum, we substitute the true first eigenfrequency for
$\omega_1$.) We then assume that
\begin{equation}\label{omega}
   \omze \leq \omega \leq \omun.
\end{equation}

\subsection*{Unstructured tetrahedral mesh}

We let $\{T_j\}_{j=1}^N$ be a regular partition of $\om$ into
tetrahedra, namely a collection of closed tetrahedra such that
\begin{equation}\label{1.4}
   \overline{{\om}} = \cup_{j=1}^NT_j ;
\end{equation}
\begin{eqnarray}\label{1.5}
&&\text{for }j\neq k\text{ either }T_j\cap T_k=\emptyset \text{ or it
    consists of a common vertex,}\\ &&\text{ a common edge or a
    common facet;}
\nonumber\end{eqnarray}
\begin{equation}\label{1.5.5}
\text{ the radius of the insphere of each tetrahedron is larger than
}r_1>0.
\end{equation}
We say that two different tetrahedra of such regular partition are
adjacent if they share a common facet.

\begin{figure}
\centering
\includegraphics[scale=0.50]{./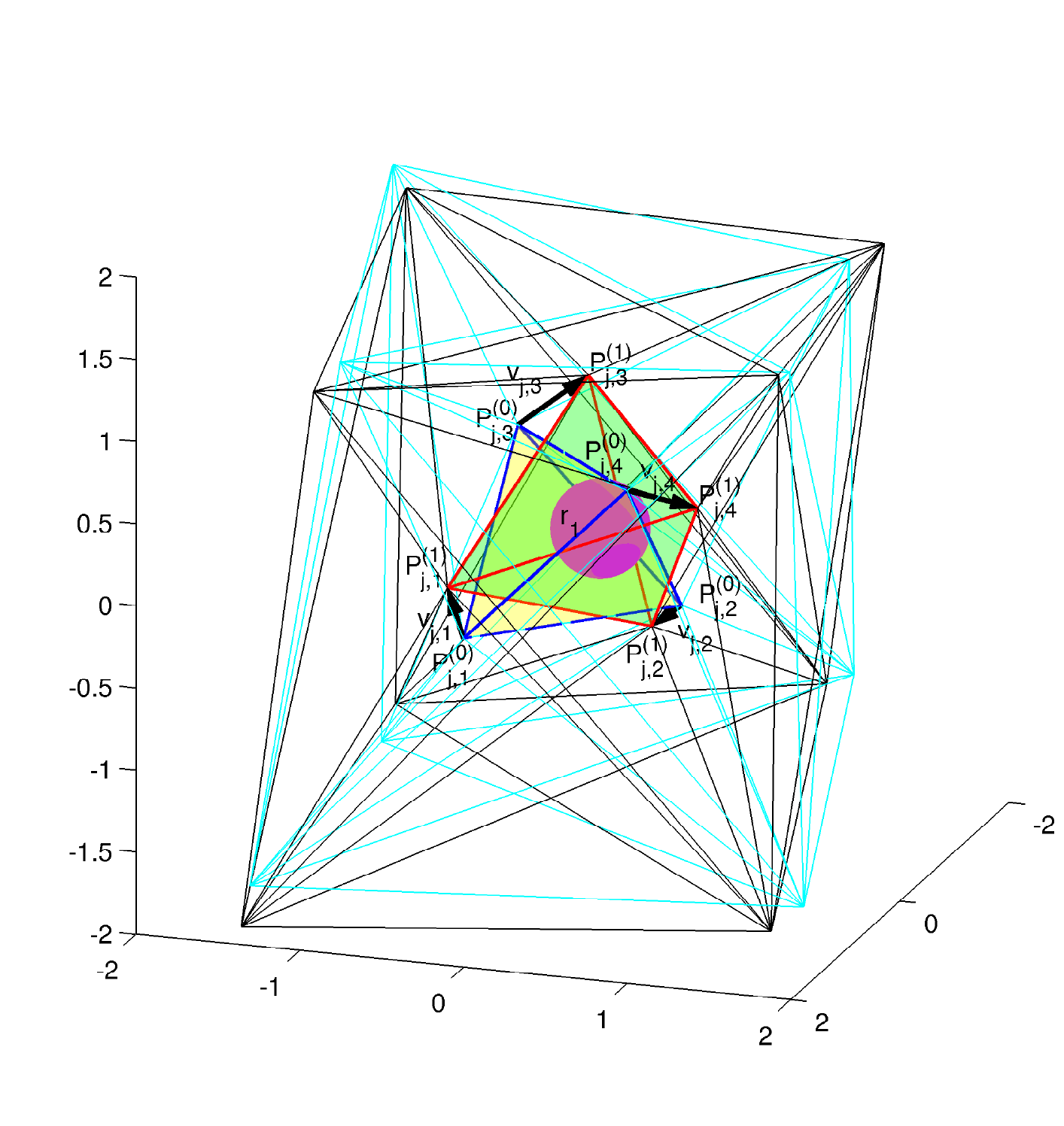}
\includegraphics[scale=0.22]{./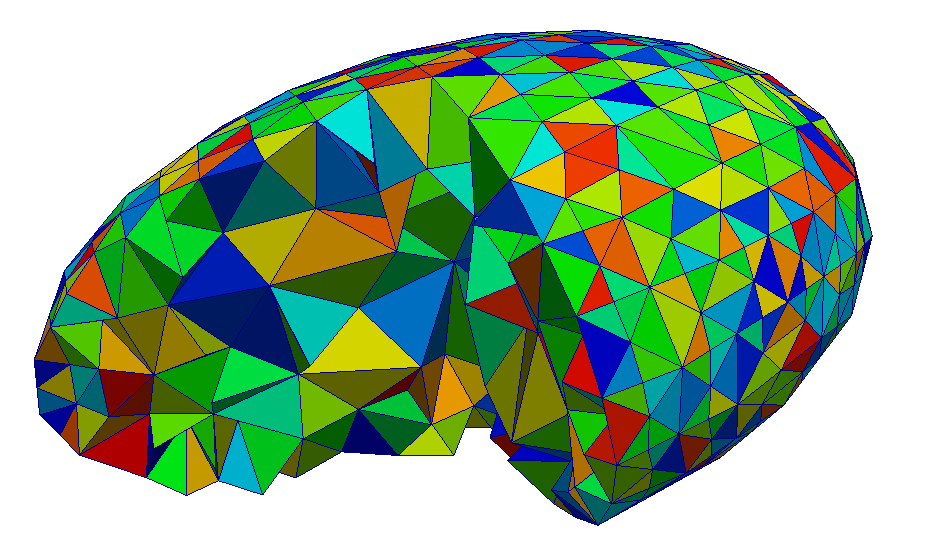}
\caption{Left: Quantities associated with the assumptions, and
  deformation of the mesh (cf.~(\ref{vertici})) Right: An example
  model, containing polyhedral interfaces, in the `stable' class.}
\label{fig:1}
\end{figure}

\begin{remark}\label{rem1}
Assumption \eqref{1.5.5}, together with \eqref{1.1} implies that the
tetrahedra of the partition are not degenerate. In particular, there
are two positive numbers $d_1$ and $\alpha_1$ (depending on $R$ and
$r_1$ only) such that
\begin{eqnarray}\label{2.1}
&&\text{for each }T_j \text{ the distance between vertices is greater
    than }d_1 \\
&&\text{ and internal angles of triangular facets are greater than
  }\alpha_1.
\nonumber
\end{eqnarray}
\end{remark}

Indeed, we point out that assumptions \eqref{1.5.5} and \eqref{1.1}
are equivalent to the following

\begin{assumption}\label{Ass:1}
There exists a positive constant $C_1$ such that
\begin{equation}\label{2.2}
    \left|B_r(P) \cap T_j\right| \geq C_1 r^3,
\end{equation}
for every $j=1,\ldots,N$, every $P \in T_j$, and $r \leq r_1$.
\end{assumption}

We show an illustration of a typical model and the assumptions
pertaining to the mesh in Figure~\ref{fig:1}.

We introduce a finite set of numbers,
\[
   \mathcal{Q} = \{\tilde{q}_1,\ldots,\tilde{q}_L\}
\]
representing the possible values which the wavespeed can attain in the
domain $\om$,
\begin{equation}\label{1.2}
   Q_0 = \max\{|\tilde{q}_j|\,:\,j=1,\ldots,L\},
\end{equation}
and
\begin{equation}\label{1.3}
   c_0 = \min\left\{|\tilde{q}_j-\tilde{q}_k|\,:\,
               j,k=1,\ldots,L,\,j\neq k\right\}.
\end{equation}

\begin{assumption}\label{Ass:2}
The potentials are piecewise constant and of the form
\begin{equation}\label{3.1}
   q(x) = \sum_{j=1}^N q_j \chi_{T_j}(x)
\end{equation}
such that $\{T_j\}_{j=1}^N$ is a regular partition of $\om$ with
\begin{equation}\label{3.2}
   N \leq N_0
\end{equation}
for some $N_0$,
\begin{equation}\label{3.3}
   q_j \in \mathcal{Q} \text{ for every }j=1,\ldots,N,
\end{equation}
and
\begin{equation}\label{3.4}
   q_j \neq q_k \text{ if }T_j\text{ is adjacent to }T_k.
\end{equation}
\end{assumption}

We denote by $\|\cdot\|_{\star}$ the norm in
$\mathcal{L}\left(H^{1/2}(\der\om),H^{-1/2}(\der\om)\right)$ defined
by
\[
   \|T\|_{\star} = \sup \{ \langle T\phi, \psi \rangle \,:\,
           \phi, \psi\in H^{1/2}(\der\om), \hbox{ }
   \|\psi\|_{H^{1/2}(\der\om)}=\|\phi\|_{H^{1/2}(\der\om)}=1\}.
\]
We refer to the values of $R$, $r_0$, $K_0$, $r_1$, $Q_0$, $c_0$,
$\omze$, $\omun$ and $N_0$ as to the \textit{a priori data}. 
In the sequel we will introduce a
number of constants that we will always denote by $C$ and, unless
otherwise stated, will depend on a priori data only. The values of
these constants might differ from one line to the other.

We state the main result

\begin{teo}\label{MainTheorem}
Given a domain $\om$ satisfying \eqref{1.1} and \eqref{1.1.5}, a set
of values $\mathcal{Q}$, and $\omega\in[\omze,\omun]$, there exist two
positive constants $\ep_0$ and $C_0$ depending on the a priori data
and on $N_0$ only such that, for every pair of potentials
\begin{equation}\label{5.2}
    q^{(0)} = \sum_{j=1}^Nq_j^{(0)}\chi_{T_j^{(0)}}\text{ and }
    q^{(1)} = \sum_{k=1}^Mq_k^{(1)}\chi_{T_k^{(1)}}
\end{equation}
satisfying Assumptions~\ref{Ass:1} and \ref{Ass:2},
if
\begin{equation}\label{5.4}
   \|\Lambda_{q^{(0)}} - \Lambda_{q^{(1)}}\|_\star \leq \ep_0,
\end{equation}
then
\begin{equation}\label{5.5}
   N = M
\end{equation}
and the order of the tetrahedra can be rearranged so that for every
$j=1,\ldots,N$ we have
\begin{equation}\label{5.6}
   q_j^{(0)} = q_j^{(1)},
\end{equation}
and
\begin{equation}\label{5.7}
   d_\mathcal{H}(T_j^{(0)},T_j^{(1)})
       \leq C_0\| \Lambda_{q^{(0)}}- \Lambda_{q^{(1)}}\|_\star,
\end{equation}
where $d_\mathcal{H}$ denotes the Hausdorff distance.
\end{teo}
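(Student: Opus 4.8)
The plan is to split the argument into a qualitative ``rough'' stage that produces the discrete conclusions \eqref{5.5}--\eqref{5.6} and brings the two meshes close, and a quantitative ``sharp'' stage that produces the Lipschitz bound \eqref{5.7}. In the rough stage I would use complex geometrical optics (CGO) solutions to control a negative Sobolev norm of $q^{(0)}-q^{(1)}$ by $\|\Lambda_{q^{(0)}}-\Lambda_{q^{(1)}}\|_\star$; the finiteness and separation ($c_0$) of the admissible values, together with the nondegeneracy of the tetrahedra (Remark~\ref{rem1}), then force the two partitions to share the same number of cells and the same values, and to be close. In the sharp stage I would parametrize a partition by its vertices, linearly interpolate between the two vertex configurations, and integrate the Gateaux derivative of $\Lambda$ along this path; a uniform lower bound on that derivative yields \eqref{5.7}.

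\textbf{Rough stage (Sections~\ref{sec:3}--\ref{sec:4}).} Taking solutions $u_i$ of $\Delta u_i+\omega^2 q^{(i)} u_i=0$, the integral identity
\[
   \omega^2\int_\om (q^{(0)}-q^{(1)})\,u_0\,u_1\,dx
      = \big\langle (\Lambda_{q^{(1)}}-\Lambda_{q^{(0)}})\,u_0|_{\der\om},\,u_1|_{\der\om}\big\rangle
\]
reduces the problem to estimating the left-hand side. Inserting CGO solutions $u_i=e^{x\cdot\zeta_i}(1+\psi_i)$ with $\zeta_0+\zeta_1=i\xi$ and $|\zeta_i|$ large, in the spirit of \cite{BdHQS, S}, I would bound the Fourier transform of $q^{(0)}-q^{(1)}$ and hence obtain a modulus-of-continuity estimate for $\|q^{(0)}-q^{(1)}\|_{H^{-1}(\om)}$ in terms of $\|\Lambda_{q^{(0)}}-\Lambda_{q^{(1)}}\|_\star$. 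Because each potential is constant on cells of volume at least $Cr_1^3$ and its values are separated by $c_0$, smallness of this norm forces the set $\{q^{(0)}\neq q^{(1)}\}$ to have small measure; the edge and angle bounds $d_1,\alpha_1$ of Remark~\ref{rem1} then allow a one-to-one matching of tetrahedra, giving \eqref{5.5}, \eqref{5.6}, and a (still rough) bound showing that all corresponding vertices are close.

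\textbf{Sharp stage (Section~\ref{sec:5}).} Writing $\gamma$ for the vector of vertices and setting $\gamma^{(t)}=(1-t)\gamma^{(0)}+t\gamma^{(1)}$, every intermediate mesh keeps the common combinatorics and values $q_j$, so
\[
   \Lambda_{q^{(1)}}-\Lambda_{q^{(0)}}
      =\Big(\int_0^1 D_\gamma\Lambda_{q^{(t)}}\,dt\Big)\big[\gamma^{(1)}-\gamma^{(0)}\big].
\]
For piecewise constant $q$ the Gateaux derivative is a sum over the moving facets $F$,
\[
   \big\langle D_\gamma\Lambda_{q^{(t)}}[\delta\gamma]\,\phi,\psi\big\rangle
      = -\,\omega^2\sum_F [q]_F\int_F V_n\,u_\phi\,u_\psi\,d\sigma,
\]
where $[q]_F$ is the jump and $V_n$ the normal velocity of $F$ induced by $\delta\gamma$. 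The estimate \eqref{5.7} follows once I establish the uniform lower bound
\[
   \big\|D_\gamma\Lambda_{q^{(t)}}[\delta\gamma]\big\|_\star\ \ge\ c\,|\delta\gamma|
\]
over admissible configurations: since the rough stage makes the path short, the averaged derivative $\int_0^1 D_\gamma\Lambda_{q^{(t)}}\,dt$ inherits the same bound (with a slightly smaller constant), whence $|\gamma^{(0)}-\gamma^{(1)}|\le c^{-1}\|\Lambda_{q^{(0)}}-\Lambda_{q^{(1)}}\|_\star$, and $d_\mathcal{H}(T_j^{(0)},T_j^{(1)})\le\max_i|v_i^{(0)}-v_i^{(1)}|$ converts this into \eqref{5.7}.

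\textbf{Main obstacle.} The crux is the lower bound on the Gateaux derivative. I would first prove injectivity of $\delta\gamma\mapsto D_\gamma\Lambda[\delta\gamma]$: if the surface sum above vanishes for all boundary data $\phi,\psi$, then choosing probing solutions $u_\phi,u_\psi$ concentrated near a single facet isolates its contribution, and since $|[q]_F|\ge c_0>0$ by the adjacency condition \eqref{3.4} this forces $V_n\equiv0$ on every facet; the angle and edge bounds $\alpha_1,d_1$ then preclude nontrivial tangential degeneracies and, with the outer boundary fixed, give $\delta\gamma=0$. Because the admissible configurations form a compact finite-dimensional set under the a priori data and $N_0$, pointwise injectivity upgrades by a standard compactness argument to the uniform constant $c$; tracking $c$ through the deformation determines $C_0$ and the threshold $\ep_0$, both depending only on the a priori data and $N_0$. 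The genuinely hard point is the cancellation analysis in the facet sum --- ensuring that coordinated vertex motions cannot make the several facet integrals cancel --- which is where the separation of values and the mesh rigidity are essential.
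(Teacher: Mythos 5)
Your overall architecture matches the paper's: a logarithmic rough estimate via CGO solutions and Alessandrini's identity, a combinatorial matching of tetrahedra using the separation $c_0$ and the nondegeneracy of the cells, a linear interpolation of the vertex configurations, and a reduction of \eqref{5.7} to a lower bound for the Gateaux derivative of the Dirichlet-to-Neumann map (your facet formula for the derivative is exactly the paper's \eqref{deriv2}, and your "averaged derivative inherits the bound" step is the paper's Second Step, which does require proving a H\"older-in-$t$ modulus of continuity for the derivative via $C^{1,\alpha}$ interior regularity). The decisive divergence, and the genuine gap, is in how you propose to obtain the lower bound $\|D_\gamma\Lambda[\delta\gamma]\|_\star\geq c\,|\delta\gamma|$. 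Your mechanism is: prove injectivity by "choosing probing solutions $u_\phi,u_\psi$ concentrated near a single facet" to isolate that facet's contribution, then upgrade by compactness. Solutions of $\Delta u+\omega^2 q u=0$ cannot be concentrated near an interior facet: by (quantitative) unique continuation, a solution that is small outside a neighborhood of an interior facet is small everywhere, so this probing strategy fails precisely for the facets that matter. You yourself flag the "cancellation analysis in the facet sum" as the genuinely hard point and do not resolve it; but that point \emph{is} the theorem.

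The paper closes this gap with a fully quantitative argument that you would need to replicate in some form. Testing the derivative against CGO pairs with $\zeta_0+\zeta_1=\xi$ shows that the Fourier transform of the facet-supported measure $H=\sum_k (q_k^+-q_k^-)(\tilde\Phi_k\cdot\nu_k)\,d\sigma_k$ satisfies $|\widehat H(\xi)|\leq C\big(m_0e^{C(|\xi|+\mu)}+\mu^{-2(1-\tau)}\big)$, where $m_0$ is the norm of the derivative. The isolation of a single facet is then performed \emph{in physical space on the test side}, not on the PDE solutions: one pairs $\widehat H$ with $\check g$ for a function $g$ supported away from all other facets and equal to $\eta f_1$ on $F_1$, splits frequencies at a level $\rho$, optimizes $\rho$ and $\mu$, and obtains $\int_{B_d'}|f_1|^2\leq C(E+m_0+1)^2|\log\frac{m_0}{E+m_0+1}|^{-(2-s)/2}$; since $f_1$ is affine and $|q^+-q^-|\geq c_0$, this bounds each normal velocity, and the mesh regularity converts normal-component bounds into $|\tilde v_{j,i}|\leq C_3\varsigma_1(m_0)$, which contradicts the normalization $\sum_{j,i}|\tilde v_{j,i}|=1$ unless $m_0\geq\varsigma_1^{-1}(1/(4C_3))$. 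Your compactness upgrade is not needed once this is done (and would in any case require checking closedness of the admissible configuration set under the combinatorial constraints \eqref{1.5}, which is delicate); more importantly, without a working injectivity argument there is nothing for compactness to upgrade.
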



\section{A rough stability estimate}
\label{sec:3}

We begin with developing a rough stability estimate for the recovery
of the potential or wavespeed.

\begin{teo}\label{rough}
Given $\om$, $q^{(0)}$, $q^{(1)}$ and $\omega$ as in Theorem
\ref{MainTheorem}, there exist two positive constants $\ep_1<1$ and
$C_2$ depending on $R$, $r_0$, $K_0$, $Q_0$, $\omze$, $\omun$ such
that, for $\| \Lambda_{q^{(0)}}- \Lambda_{q^{(1)}}\|_\star<\ep_1$,
\begin{equation}\label{e1.1}
   \|q^{(0)}-q^{(1)}\|_{L^2(\om)} \leq C_2 \sqrt{N_0} \,
   \left|\log \left(\|\Lambda_{q^{(0)}}
            - \Lambda_{q^{(1)}}\|_\star\right)\right|^{-1/7}.
\end{equation}
\end{teo}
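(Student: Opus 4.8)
The plan is to recover the potential from the Dirichlet-to-Neumann map using complex geometrical optics (CGO) solutions, following the Sylvester–Uhlmann strategy adapted to obtain quantitative (logarithmic) bounds. This is essentially a logarithmic stability estimate in the $L^\infty$-to-$L^2$ sense, and the factor $\sqrt{N_0}$ together with the refined exponent $-1/7$ must emerge from a careful balancing of the parameters governing the CGO construction.

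Let me think about how I would structure this.

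First I would set $\gamma = q^{(0)} - q^{(1)}$ and derive the standard integral identity linking $\langle (\Lambda_{q^{(0)}} - \Lambda_{q^{(1)}}) \phi_0, \phi_1 \rangle$ to $\int_\Omega \gamma\, u_0 u_1\, dx$, where $u_0, u_1$ are solutions of the two Helmholtz equations with boundary traces $\phi_0, \phi_1$. Into this identity I would insert CGO solutions of the form $u_j = e^{x\cdot\zeta_j}(1 + r_j)$ with complex frequency vectors $\zeta_j$ satisfying $\zeta_j\cdot\zeta_j = -\omega^2$ (so that $e^{x\cdot\zeta_j}$ is approximately harmonic relative to the Helmholtz operator) and $\zeta_0 + \zeta_1 = i\xi$ for a prescribed real frequency $\xi\in\RR^3$. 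The product $u_0 u_1$ then behaves like $e^{i x\cdot \xi}(1 + \text{remainder})$, so the integral identity produces, up to controllable error, the Fourier transform $\hat\gamma(\xi)$. The key quantitative inputs are: (i) a bound on the CGO remainders $r_j$ in $L^2(\Omega)$ of the form $\|r_j\|_{L^2} \le C/|\tau|$ where $\tau$ measures the size of the complex part of $\zeta_j$; and (ii) the fact that $\|u_j\|$ grows like $e^{C\tau}$ because of the real part of $x\cdot\zeta_j$.

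Next, I would estimate $\hat\gamma(\xi)$ for $|\xi|$ below some cutoff $\rho$: on this low-frequency regime I split $\int_\Omega \gamma\, u_0 u_1$ into the leading Fourier term plus remainders, obtaining
\[
   |\hat\gamma(\xi)| \le C e^{C\tau}\|\Lambda_{q^{(0)}} - \Lambda_{q^{(1)}}\|_\star + \frac{C}{\tau}\|\gamma\|_{L^2},
\]
valid for $|\xi| \le c\tau$ (the constraint linking the cutoff to $\tau$ comes from the requirement that the $\zeta_j$ with the right inner products actually exist for the given $\xi$). For the high-frequency part $|\xi| > \rho$ I would use the a priori smoothness or, more precisely here, the finite-dimensional structure: since $\gamma$ is piecewise constant on at most $N_0$ tetrahedra with values bounded by $2Q_0$, I have the crude global bound $\|\gamma\|_{L^2} \le C\sqrt{N_0}\,Q_0$ and I can control the tail $\int_{|\xi|>\rho}|\hat\gamma|^2$ by interpolation against this $L^2$ bound and against a weak regularity estimate for $\hat\gamma$ (the characteristic functions of Lipschitz tetrahedra give $\hat\gamma$ decay of order $|\xi|^{-1}$ in an averaged sense, again with the $\sqrt{N_0}$ appearing in the constant). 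Combining the low- and high-frequency pieces via Plancherel gives
\[
   \|\gamma\|_{L^2(\Omega)}^2 \le C\sqrt{N_0}\left( \rho^3 e^{C\tau}\|\Lambda_{q^{(0)}} - \Lambda_{q^{(1)}}\|_\star^2 + \frac{\rho^3}{\tau^2}\|\gamma\|_{L^2}^2 + \frac{1}{\rho}\right).
\]

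The final step is the optimization that produces the exponent $-1/7$. With $\rho \sim c\tau$ forced by the CGO geometry, the middle term is absorbed into the left-hand side once $\tau$ is large enough, leaving a bound of the shape $\|\gamma\|_{L^2}^2 \le C\sqrt{N_0}(\tau^3 e^{C\tau}\varepsilon^2 + \tau^{-1})$, where $\varepsilon = \|\Lambda_{q^{(0)}} - \Lambda_{q^{(1)}}\|_\star$. Choosing $\tau$ to balance $e^{C\tau}\varepsilon^2$ against $\tau^{-1}$, i.e.\ $\tau \sim c|\log\varepsilon|$, and tracking the polynomial factors carefully yields $\|\gamma\|_{L^2} \le C\sqrt{N_0}\,|\log\varepsilon|^{-1/7}$; the precise exponent $1/7$ is dictated by how the three-dimensional volume factor $\rho^3$, the interpolation tail exponent, and the exponential weight trade off, and getting it to come out exactly requires keeping every power of $\tau$ honest. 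I expect the main obstacle to be precisely this bookkeeping: establishing the CGO remainder estimate with the sharp dependence on $\tau$ (as proposed in the cited reference \cite{S} to improve constants), and then performing the three-way interpolation so that the $\sqrt{N_0}$ dependence is linear in $\sqrt{N_0}$ and the logarithmic exponent lands on $-1/7$ rather than some worse value. The analytic core — the integral identity and CGO insertion — is standard, but squeezing out the stated quantitative rate is the delicate part.
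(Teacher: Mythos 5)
Your overall route is the same as the paper's: Alessandrini's identity, insertion of CGO solutions with $\zeta_0+\zeta_1=\xi$ and remainders of size $O(1/\mu)$ in $L^2$, a low/high frequency splitting of $\|q^{(0)}-q^{(1)}\|_{L^2}^2$ via Plancherel, a tail estimate coming from the fact that characteristic functions of Lipschitz tetrahedra lie in $H^s$ for $s<1/2$ (this is exactly the paper's bound $\|q^{(0)}-q^{(1)}\|_{H^s}^2\le CN_0$, which is where $\sqrt{N_0}$ enters), and finally $\mu\sim|\log\varepsilon|$.

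There is, however, a concrete flaw in your final balancing step. You couple the frequency cutoff to the CGO parameter by setting $\rho\sim c\tau$, and your intermediate bound then contains the term $\rho^3\tau^{-2}\|\gamma\|_{L^2}^2\sim c^3\tau\,\|\gamma\|_{L^2}^2$, which \emph{grows} with $\tau$ and therefore cannot be absorbed into the left-hand side; as written the optimization fails. Two things go differently in the paper. First, the CGO vectors \eqref{e2.1} are built so that they exist for \emph{every} $\xi$ (with $|\zeta_k|=\max\{\mu,|\xi|/\sqrt2\}$), so no constraint $|\xi|\le c\mu$ is forced and $\rho$ is a free parameter; it is chosen as $\rho=\mu^{2/(3+2s)}\ll\mu$, which makes the corresponding term $\rho^3/\mu^2\to0$ and matches it against the tail $\rho^{-2s}$ (both become $\mu^{-4s/(3+2s)}$, giving $\alpha=2s/(3+2s)$ and the stated exponent at $s=1/4$). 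Second, the paper never reintroduces $\|\gamma\|_{L^2}$ on the right-hand side: the CGO-remainder contribution is bounded using the a priori bound $|q^{(0)}-q^{(1)}|\le 2Q_0$, yielding a term $C/\mu^2$ per frequency, so no absorption argument is needed at all. Your sketch is repairable by making these two adjustments, but without them the step "the middle term is absorbed once $\tau$ is large enough" is false.
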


\begin{proof}
We proceed as in \cite{BdHQS}. Alessandrini's identity states that
\begin{equation}\label{e1.2}
   \omega^2 \int_\om (q^{(0)} - q^{(1)}) u_0 u_1 dx
   = \langle (\Lambda_0 - \Lambda_1)(u_0|_{\der\om}),
            u_1|_{\der\om} \rangle
\end{equation}
for every pair of functions $u_0$ and $u_1$ such that
\[
   \Delta u_k + \omega^2q^{(k)}u_k = 0
                \text{ in }\om\text{ for } k=0,1,
\]
where we use the shorthand notation, $\Lambda_k =
\Lambda_{q^{(k)}}$.

We fix $\xi \in \RR^3$ and let $\eta_1$ and $\eta_2$ be unit vectors
in $\RR^3$ such that $\{\xi,\eta_2,\eta_2\}$ is an orthogonal set of
vectors. We let $\mu>0$ be a parameter to be chosen later, and set,
for $k=0,1$,
\begin{equation}\label{e2.1}
    \zeta_k=\left\{\begin{array}{ccc}
    (-1)^{k+1}\frac{\mu}{\sqrt{2}}
    \left(\sqrt{1-\frac{|\xi|^2}{2\mu^2}}\,\,\,\eta_1
    +\frac{(-1)^{k}}{\sqrt{2}\mu}\,\xi+i\,\eta_2\right) & \text{ if } & \frac{|\xi|}{\mu\sqrt{2}}<1, 
\\
                     (-1)^{k+1}\frac{\mu}{\sqrt{2}}\left(\frac{(-1)^{k}}{\sqrt{2}\mu}\,\xi+i\sqrt{\frac{|\xi|^2}{2\mu^2}-1}\,\,\,\eta_1+\eta_2\right) & \text{ if } & \frac{|\xi|}{\mu\sqrt{2}}\geq1.
                   \end{array}
    \right.
\end{equation}
As can be easily checked,
\[
   \zeta_0 + \zeta_1 = \xi,
\]
\[
   \zeta_k \cdot \zeta_k = 0\text{ for }k=0,1
\]
and
\begin{equation}\label{e2.2}
    |\zeta_k| = \max\left\{\mu,\frac{|\xi|}{\sqrt{2}}\right\}.
\end{equation}
We use here complex geometrical optics (CGO) solutions of the
Helmholtz equation and, in particular, the estimates in
\cite[Theorem~3.8]{S} which are due to \cite{H}. For $|\zeta_k| \geq
\max\{\omun^2Q_0,1\} =: c_1$, there is a solution $u_k$ of
\[
   \Delta u_k+\omega^2q^{(k)}u_k=0\text{ in }\om
\]
of the form
\begin{equation}\label{e2.3}
   u_k(x) = e^{i x \cdot \zeta_k} (1+\varphi_k(x)),
\end{equation}
with
\begin{eqnarray}\label{e3.1}
   \|\varphi_k\|_{L^2(\om)} &\leq&
       \frac{C\omun^2Q_0}{|\zeta_k|}\leq\frac{C\omun^2Q_0}{\mu}, 
\nonumber\\
&&
\\
   \|\nabla\varphi_k\|_{L^2(\om)} &\leq& C\omun^2Q_0, 
\nonumber
\end{eqnarray}
where $C=C(R)$.

Inserting \eqref{e2.3} into \eqref{e1.2}, we get
\begin{eqnarray*}
&& \omega^2 \left|(\widehat{q}^{(0)} - \widehat{q}^{(1)})(\xi)
    \right| \leq \left|\langle
    (\Lambda_0-\Lambda_1)(u_0|_{\der\om}),
        u_1|_{\der\om} \rangle\right|
\\
&& \quad + \omega^2 \left|\int_\om (q^{(0)}(x)-q^{(1)}(x))
          e^{i \xi \cdot x}(\varphi_0(x) + \varphi_1(x)
        + \varphi_0(x) \varphi_1(x)) dx \right|
\\
&& \leq \|\Lambda_0-\Lambda_1\|_\star \|u_0\|_{H^1(\om)}
             \|u_1\|_{H^1(\om)}
   + 2\omega^2 Q_0 \left|\int_\om
        (\varphi_0 + \varphi_1 + \varphi_0 \varphi_1) dx\right|.
\end{eqnarray*}
Hence,
\begin{eqnarray*}
&& \left|(\widehat{q}^{(0)} - \widehat{q}^{(1)})(\xi)\right|^2
\\
&& \leq \frac{2}{\omze^4} \|\Lambda_0-\Lambda_1\|^2_\star
   \|u_0\|^2_{H^1(\om)} \|u_1\|^2_{H^1(\om)}
   + 8Q^2_0 \left|\int_\om (\varphi_0 + \varphi_1
          + \varphi_0 \varphi_1) dx\right|^2
\\
&& \leq \frac{2}{\omze^4} \|\Lambda_0-\Lambda_1\|^2_\star
   \|u_0\|^2_{H^1(\om)} \|u_1\|^2_{H^1(\om)}
   + 8Q^2_0 |\om| \left(\|\varphi_0\|_{L^2(\om)} +
     \|\varphi_1\|_{L^2(\om)}\right)
\\[0.2cm]
&& \quad + 8B_0^2 \|\varphi_0\|_{L^2(\om)}
              \|\varphi_1\|_{L^2(\om)}.
\end{eqnarray*}
With \eqref{e2.3} and \eqref{e3.1} we find that there exists a
constant $c_2$ depending only on $R$ such that, for $\mu > c_2$,
\begin{equation}\label{e4.05}
   \|u_k\|_{H^1(\om)} \leq C e^{2R(\mu+|\xi|)},
\end{equation}
$k=0,1$, where $C=C(R,\omun,Q_0)$. Hence,
\begin{equation}\label{e4.1}
   \left|(\widehat{q}^{(0)} - \widehat{q}^{(1)})(\xi)\right|^2
   \leq C \left(e^{8R(\mu+|\xi|)} \|\Lambda_0-\Lambda_1\|^2_\star
                + \frac{1}{\mu^2}\right),
\end{equation}
where $C=C(R,\omze,\omun,Q_0)$. But then, for $\mu \geq
\max(c_1,c_2)$,
\begin{eqnarray}\label{e4.2}
   \|q^{(0)}-q^{(1)}\|^2_{L^2(\om)} &=&
   \int_{|\xi|\leq\rho} \left|
        (\widehat{q}^{(0)}-\widehat{q}^{(1)})(\xi)\right|^2 d\xi
   + \int_{|\xi|>\rho}
     \left|(\widehat{q}^{(0)}-\widehat{q}^{(1)})(\xi)\right|^2 d\xi
\nonumber\\
   &\leq& C \rho^3 \left(e^{8R(\mu+\rho)}
         \|\Lambda_0-\Lambda_1\|^{2}_\star+\frac{1}{\mu^2}\right)
\\
&& + \int_{|\xi|>\rho}
   \left|(\widehat{q}^{(0)}-\widehat{q}^{(1)})(\xi)\right|^2d\xi.
\nonumber
\end{eqnarray}
To estimate the integral in \eqref{e4.2} we show that for every
$s\in(0,1/2)$
\begin{equation}\label{e5.1}
   \|q^{(0)}-q^{(1)}\|^2_{H^{s}(\om)} \leq C\sqrt{N_0},
\end{equation}
where $C=C(R,r_0,Q_0)$. Indeed, by \cite{MP} we have
\begin{eqnarray*}
&&\|q^{(0)}-q^{(1)}\|^2_{H^{s}(\om)} \leq 2\left( \|q^{(0)}\|^2_{H^{s}(\om)}+ \|q^{(1)}\|^2_{H^{s}(\om)}\right)
\\
&& \leq 2\left(\sum_{j=1}^N|q_j^{(0)}|^2|T_j^{(0)}|^{1-2s}|\der T_j^{(0)}|^{2s}+\sum_{k=1}^M|q_k^{(1)}|^2|T_k^{(1)}|^{1-2s}|\der T_k^{(1)}|^{2s} \right)
\\
&& \leq CN_0,
\end{eqnarray*}
where $C=C(R,r_0,Q_0)$. 

Using \eqref{e5.1},
\begin{eqnarray}\label{e5.2}
\int_{|\xi|>\rho} \left|\left(\widehat{q}^{(0)}-\widehat{q}^{(1)}\right)(\xi)\right|^2d\xi  &\leq& \frac{1}{\rho^{2s}}\int_{|\xi|>\rho}
\left(1+|\xi|^{s}\right)^2\left|\left(\widehat{q}^{(0)}-\widehat{q}^{(1)}\right)(\xi)\right|^2d\xi
\nonumber\\
   &\leq&  \frac{1}{\rho^{2s}}\|q^{(0)}-q^{(1)}\|^2_{H^{s}(\om)} \leq \frac{CN_0}{\rho^{2s}}.
\end{eqnarray}
Finally, by inserting \eqref{e5.2} into \eqref{e4.2}, we get that
\begin{equation*}
     \|q^{(0)}-q^{(1)}\|^2_{L^2(\om)}\leq CN_0\left\{\rho^3\left(e^{8R(\mu+\rho)}\|\Lambda_0-\Lambda_1\|^{2}_\star+
     \frac{1}{\mu^2}\right)+\frac{1}{\rho^{2s}}\right\},
\end{equation*}
where $C=C(R,r_0,\omze,\omun,Q_0)$. We then choose
\[
   \rho = \mu^{\frac{2}{3+2s}},
\]
and observe that there is a constant $c_3$ depending only on $R$ such
that, for $\mu\geq c_3$,
\[
   \rho^3 e^{8 R (\mu+\rho)} \leq e^{18 R \mu}
\]
so that
\begin{equation*}
   \|q^{(0)}-q^{(1)}\|^2_{L^2(\om)}
   \leq C N_0 \left(e^{18 R \mu}
     \|\Lambda_0-\Lambda_1\|^{2}_\star
              + \frac{1}{\mu^{\frac{4s}{3+2s}}}\right),
\end{equation*}
where $C=C(R,r_0,\omze,\omun,Q_0)$.

We now take
\[
   \mu = \frac{1}{18R}
        \left|\log\|\Lambda_0-\Lambda_1\|_\star\right|
\]
and assume that
\[
   \|\Lambda_0-\Lambda_1\|_\star \leq e^{-18Rc_3} =: \ep_1
\]
so that $\mu \geq \max\{c_1,c_2,c_3\}$. Then
\begin{equation*}
   \|q^{(0)}-q^{(1)}\|^2_{L^2(\om)}
   \leq C N_0 \left(\|\Lambda_0-\Lambda_1\|^{2}_\star
       + \left|\vphantom{\int}
   \log\|\Lambda_0-\Lambda_1\|_\star\right|^{-\alpha}\right),
\end{equation*}
where $\alpha=\frac{2s}{3+2s}$. The claim follows upon choosing
$s=\frac{1}{4}$.
\end{proof}

Next, we establish an estimate for the Haussdorff distance between two
domain partitions in terms of the difference of potentials defined on
these partitions.

\begin{prop}\label{proptetr}
Given $\om$, $q^{(0)}$ and $q^{(1)}$ as in Theorem~\ref{MainTheorem},
there exists a positive constant $\epsi_1$ depending on $R$, $r_1$,
$Q_0$ and $c_0$ such that, if
\begin{equation}\label{P1.1}
   \|q^{(0)} - q^{(1)}\|_{L^2(\om)} \leq \epsi_1
\end{equation}
then
\begin{equation}\label{P1.2}
   N = M
\end{equation}
and the order of the tetrahedra can be rearranged so that for every
$j=1,\ldots,N$
\begin{equation}\label{P1.3}
   q_j^{(0)} = q_{j}^{(1)}
\end{equation}
and
\begin{equation}\label{P1.4}
   d_\mathcal{H}(T_j^{(0)},T_j^{(1)})
        \leq \frac{\|q^{(0)}-q^{(1)}\|^{2/3}_{L^2(\om)}}{
             \left(c_0^{2}C_1\right)^{1/3}},
\end{equation}
where $c_0$ is given by \eqref{1.3} and $C_1$ by \eqref{2.2}.
\end{prop}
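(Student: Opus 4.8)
The plan is to convert the smallness of the potential difference in $L^2$ into a bound on the \emph{volume} of the set where $q^{(0)}$ and $q^{(1)}$ disagree, and then to turn that volume bound into a Hausdorff bound by exploiting the nondegeneracy of the tetrahedra encoded in Assumption~\ref{Ass:1}. Concretely, I would first introduce the mismatch set
\[
   D = \{x \in \om \,:\, q^{(0)}(x) \neq q^{(1)}(x)\}.
\]
At each point of $D$ the two potentials take two \emph{distinct} values of $\mathcal{Q}$, so by the definition \eqref{1.3} of $c_0$ their difference has modulus at least $c_0$ there. Hence
\[
   \|q^{(0)}-q^{(1)}\|^2_{L^2(\om)}
       \geq \int_D |q^{(0)}-q^{(1)}|^2\,dx \geq c_0^2\,|D|,
\]
so that $|D| \leq c_0^{-2}\|q^{(0)}-q^{(1)}\|^2_{L^2(\om)} =: \bar D$. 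By taking $\epsi_1$ small we can make $\bar D$, and hence $|D|$, as small as we wish.

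The core step is a ball-counting argument. Set $\delta = (\bar D/C_1)^{1/3}$, which (since $\bar D = c_0^{-2}\|q^{(0)}-q^{(1)}\|^2_{L^2(\om)}$) is exactly the quantity on the right-hand side of \eqref{P1.4}. Suppose $P \in T_j^{(0)}$ carries value $a := q_j^{(0)}$ and that $\mathrm{dist}(P,\{q^{(1)}=a\}) > \delta$, so that some ball $B_r(P)$ with $r>\delta$ misses the whole level set $\{q^{(1)}=a\}$. For $\bar D$ below the threshold we have $r\le r_1$, so on $B_r(P)\cap T_j^{(0)}$ we have $q^{(0)} = a \neq q^{(1)}$, whence this intersection lies in $D$; but by \eqref{2.2} it has volume at least $C_1 r^3 > C_1\delta^3 = \bar D \geq |D|$, a contradiction. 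Thus every $P \in T_j^{(0)}$ lies within $\delta$ of $\{q^{(1)}=a\}$, and symmetrically every point of a value-$a$ tetrahedron of the second partition lies within $\delta$ of $\{q^{(0)}=a\}$. This is where the power $2/3$ and the constant $(c_0^2 C_1)^{-1/3}$ come from.

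It remains to organize this into \eqref{P1.2}–\eqref{P1.4}, i.e.\ to produce a value-preserving \emph{bijection} between the two families of tetrahedra for which the estimate just proved becomes an estimate between matched pairs. For this I would use that, by \eqref{1.5.5}, each $T_j^{(0)}$ contains an insphere of radius $r_1$, on which $q^{(0)} \equiv q_j^{(0)}$; once $\bar D$ is below the threshold this ball cannot lie inside $D$, so it must meet a tetrahedron of the second partition carrying the same value $q_j^{(0)}$. This assigns to $T_j^{(0)}$ a partner, and running the argument in both directions forces $N = M$, identifies the matched values \eqref{P1.3}, and upgrades the level-set estimate to the pairwise Hausdorff bound \eqref{P1.4}.

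The main obstacle—and the reason for the threshold $\epsi_1$ depending on $r_1$ (through the nondegeneracy constants $d_1,\alpha_1$ of Remark~\ref{rem1}) as well as on $c_0$, $Q_0$ and $R$—is precisely that the level set $\{q^{(1)}=a\}$ may consist of \emph{several} tetrahedra sharing the value $a$. One must rule out that a point of $T_j^{(0)}$ sits close to a \emph{wrong} value-$a$ tetrahedron of the second partition, since this would break both the well-definedness of the correspondence and the passage from the level-set bound to the pairwise bound. Here the adjacency condition \eqref{3.4} is essential: two tetrahedra of one partition with a common value cannot share a facet, so by \eqref{2.1} distinct value-$a$ components are quantitatively separated; for $\delta$ (equivalently $\bar D$) below the threshold this separation guarantees that each $T_j^{(0)}$ sees exactly one value-$a$ component of the other partition, making the correspondence well defined, bijective, and compatible with \eqref{P1.4}.
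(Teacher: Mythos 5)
Your first half is correct and is essentially the paper's argument: the $L^2$ smallness forces $|\{q^{(0)}\neq q^{(1)}\}|\leq c_0^{-2}\|q^{(0)}-q^{(1)}\|_{L^2(\om)}^2$ by \eqref{1.3}, and the ball-counting step via Assumption~\ref{Ass:1} (the paper phrases it as $|T_j^{(0)}\cap T_k^{(1)}|\leq \epsi^2/c_0^2$ whenever $q_j^{(0)}\neq q_k^{(1)}$, and then shows $T_k^{(1)}\cap T_{j,\delta_\epsi}^{(0)}=\emptyset$ for $\delta_\epsi=(\epsi^2/(c_0^2C_1))^{1/3}$) produces exactly the exponent $2/3$ and the constant $(c_0^2C_1)^{-1/3}$. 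The threshold $\epsi_1$ comes, as you say, from requiring $\delta_\epsi\leq r_1$ so that \eqref{2.2} applies.

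The gap is in your mechanism for making the correspondence single-valued. You claim that, by \eqref{2.1} and the adjacency condition \eqref{3.4}, ``distinct value-$a$ components are quantitatively separated.'' This is false: condition \eqref{1.5} allows two tetrahedra of the same partition carrying the same value to share a common \emph{edge} or \emph{vertex} --- only a common facet is excluded by \eqref{3.4} --- so the distance between two distinct value-$a$ tetrahedra of the second partition can be exactly zero, and no threshold on $\delta$ can rule out that $T_j^{(0)}$ is $\delta$-close to both. Consequently neither the well-definedness of your insphere-based partner assignment nor the upgrade from ``close to the level set $\{q^{(1)}=a\}$'' to ``close to one particular $T_k^{(1)}$'' follows from what you wrote. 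The correct mechanism, used in the paper, is topological rather than metric: the eroded set $T_{j,\delta_\epsi}^{(0)}$ is connected and, by the first half, is covered by the (closed) value-$a$ tetrahedra of the second partition; if it met two of them, some facet of the second partition would cross its interior, and the tetrahedron on the other side of that facet would be adjacent, hence by \eqref{3.4} would carry a value different from $a$, yet would intersect $T_{j,\delta_\epsi}^{(0)}$ in a nonempty open set --- contradicting \eqref{P3.3}. Hence $T_{j,\delta_\epsi}^{(0)}\subset T_{\overline{k}(j)}^{(1)}$ for a unique $\overline{k}(j)$ with matching value; the symmetric inclusion $T_{\overline{k}(j),\delta_\epsi}^{(1)}\subset T_j^{(0)}$ then gives injectivity in both directions, so $N=M$, $\overline{k}$ is a permutation, and the pair of inclusions yields \eqref{P1.4}. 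Replace your separation claim by this connectedness argument and the proof closes.
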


\begin{proof}
We write
\begin{equation}\label{P2.0}
   \epsi = \|q^{(0)}-q^{(1)}\|_{L^2(\om)}.
\end{equation}
For every $l\in\{1,\ldots,L\}$ we let
\begin{equation}\label{P2.1}
   \mathcal{B}_l^{(0)}
   = \left\{j\in\{1,\ldots,N\}\,:\,q_j^{(0)}=\tilde{q}_l\right\}
\end{equation}
and
\begin{equation}\label{P2.2}
   \mathcal{B}_l^{(1)}
   = \left\{k\in\{1,\ldots,M\}\,:\,q_k^{(1)}=\tilde{q}_l\right\}.
\end{equation}
We note that
\begin{eqnarray}\label{P2.3}
  \|q^{(0)}-q^{(1)}\|^2_{L^2(\om)} &=&
  \sum_{l=1}^L\left(\sum_{j\in \mathcal{B}_l^{(0)}}\sum_{k\notin \mathcal{B}_l^{(1)}}\left|q^{(0)}_j-q^{(1)}_k\right|^2\left|T_j^{(0)}\cap T_k^{(1)}\right| \right).
\end{eqnarray}
If $j \in \mathcal{B}_l^{(0)}$ and $k \notin \mathcal{B}_l^{(1)}$
then, by \eqref{1.3},
\[
   \left|q^{(0)}_j-q^{(1)}_k\right|\geq c_0;
\]
hence, by \eqref{P2.3} and \eqref{P2.0}, we have
\begin{eqnarray}
  \epsi^2 &\geq&
   c_0^2 \sum_{l=1}^L\sum_{j\in \mathcal{B}_l^{(0)}}
    \sum_{k \notin \mathcal{B}_l^{(1)}}
            \left|T_j^{(0)}\cap T_k^{(1)}\right|
\end{eqnarray}
so that
\begin{equation}\label{P3.2}
   \left|T_j^{(0)} \cap T_k^{(1)}\right|
   \leq \frac{\epsi^2}{c_0^2} \text{ for every }j,k
        \text{ such that }q_j^{(0)} \neq q_k^{(1)}.
\end{equation}

By assumption \eqref{2.1}, estimate \eqref{P3.2} implies that
$T_j^{(0)} \cap T_k^{(1)}$ is close to $\der T_j^{(0)}$. To make this
precise, we introduce
\[
   T_{j,\delta}^{(0)}
   = \left\{x\in T_j^{(0)} \,:\,
              d(x,\der T_j^{(0)})>\delta\right\}
\]
and prove that
\begin{equation}\label{P3.3}
   T_k^{(1)} \cap T_{j,\delta_\epsi}^{(0)} = \emptyset
\end{equation}
with
\begin{equation}\label{P3.4}
   \delta_\epsi = \left(\frac{\epsi^2}{c_0^2C_1}\right)^{1/3}.
\end{equation}
Indeed, assume that $j \in \mathcal{B}_l^{(0)}$ for some $l \in
\{1,\ldots,N\}$, $k \notin \mathcal{B}_l^{(1)}$ and that there is a
point $P \in T_j^{(0)}\cap T_k^{(1)}$ such that
\begin{equation}\label{P4.1}
   d(P,\der T_j^{(0)})\geq \delta,
\end{equation}
that is, $B_\delta(P) \subset T_j^{(0)}$. Using assumption \eqref{2.1}
and \eqref{2.2} in Remark \ref{rem1}, it then follows that
\begin{equation}\label{P4.3}
   \left|T_j^{(0)} \cap T_k^{(1)}\right|
   \geq \left|B_\delta(P) \cap T_k^{(1)}\right| \geq C_1\delta^3
\end{equation}
if $\delta<r_1$. By \eqref{P3.2}
\begin{equation}\label{P4.4}
    C_1 \delta^3 \leq \frac{\epsi^2}{c_0^2}.
\end{equation}
Thus \eqref{P3.3} holds provided that
\[
   \delta_\epsi
       = \left(\frac{\epsi^2}{c_0^2C_1}\right)^{1/3} \leq r_1,
\]
that is,
\begin{equation}\label{P4.5}
   \epsi \leq \epsi_1=\sqrt{r_1^3c_0^2C_1}.
\end{equation}

Now we consider $T_{j,\delta_\epsi}^{(0)}$ for $\epsi\leq\epsi_1$ and
$j\in \mathcal{B}_l^{(0)}$ for some $l$. Since $\{T_k^{(1)}\}_k$ is a
partition of $\om$, we can write
\begin{eqnarray*}
  T_{j,\delta_\epsi}^{(0)}&=& T_{j,\delta_\epsi}^{(0)}\cap \left(\bigcup_{k=1}^MT_k^{(1)}\right) \\
   &=&\bigcup_{k=1}^M\left(T_{j,\delta_\epsi}^{(0)}\cap T_k^{(1)}\right).
\end{eqnarray*}
Using \eqref{P3.3},
\[
   T_{j,\delta_\epsi}^{(0)} \cap T_k^{(1)}
   = \emptyset\text{ for }k\notin \mathcal{B}_l^{(1)},
\]
and we then obtain
\begin{equation}\label{P5.1}
   T_{j,\delta_\epsi}^{(0)} = \bigcup_{k \in \mathcal{B}_l^{(1)}}
      \left(T_{j,\delta_\epsi}^{(0)}\cap T_k^{(1)}\right).
\end{equation}
If $k_1$ and $k_2\in \mathcal{B}_l^{(1)}$, then $T_{k_1}^{(1)}$ and
$T_{k_2}^{(1)}$ cannot be adjacent by assumption \eqref{3.4}. This
means that there is a unique $k \in \mathcal{B}_l^{(1)}$ such that
\begin{equation}\label{P5.2}
    T_{j,\delta_\epsi}^{(0)} \cap T_k^{(1)} \neq \emptyset
\end{equation}
and, with \eqref{P5.1},
\[
   T_{j,\delta_\epsi}^{(0)}
   = T_{j,\delta_\epsi}^{(0)} \cap T_k^{(1)}\subset T_k^{(1)}.
\]
Thus we proved that for every $j \in \{1,\ldots,N\}$ there is a unique
index $\overline{k}(j) \in \{1,\ldots,M\}$ such that
\begin{equation}\label{P6.1}
   q^{(0)}_j = q^{(1)}_{\overline{k}(j)}
\end{equation}
and
\begin{equation}\label{P6.2}
   T_{j,\delta_\epsi}^{(0)} \subset T_{\overline{k}(j)}^{(1)}.
\end{equation}
In particular, this implies that $M \geq N$.

By interchanging the roles of $q^{(0)}$ and $q^{(1)}$ it follows that
$M=N$, $\overline{k}$ is a permutation on $\{1,\ldots,N\}$ and
\[
   T_{j,\delta_\epsi}^{(0)} \subset T_{\overline{k}(j)}^{(1)}
\text{ and }T_{\overline{k}(j),\delta_\epsi}^{(1)}\subset T_{j}^{(0)}
\]
that, by \eqref{P3.4}, gives \eqref{P1.4}.
\end{proof}

Combining Theorem~\ref{rough} and Proposition~\ref{proptetr}, we
obtain the following logarithmic stability estimate

\begin{coro}\label{cor1}
Under the assumptions of Theorem \ref{rough}, there is a constant
$\ep_2 < 1$ depending only on the a priori data such that, if
\[
   \|\Lambda_{q^{(0)}} - \Lambda_{q^{(1)}}\|_\star\leq\ep_2
\]
then
\[
   N = M
\]
and the order of tetrahedra can be rearranged so that
\[
   q_j^{(0)} = q_j^{(1)}
\]
and
\begin{equation}\label{corod}
   d_\mathcal{H}(T_j^{(0)},T_j^{(1)}) \leq
   \left(\frac{C_2^2 N_0}{c_0^2C_1}\right)^{1/3}
   \left|\log \left(\| \Lambda_{q^{(0)}}
         - \Lambda_{q^{(1)}}\|_\star\right)\right|^{-2/21}.
\end{equation}
\end{coro}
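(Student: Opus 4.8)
The plan is to compose the two preceding results directly: Theorem~\ref{rough} bounds $\|q^{(0)}-q^{(1)}\|_{L^2(\om)}$ by a negative power of $|\log\|\Lambda_0-\Lambda_1\|_\star|$, while Proposition~\ref{proptetr} turns smallness of this $L^2$ norm into the combinatorial conclusions $N=M$, $q_j^{(0)}=q_j^{(1)}$ and the Hausdorff bound \eqref{P1.4}. The only genuine task is to fix a threshold $\ep_2$ that forces the hypotheses of both statements to hold at once, and then to carry the exponents through the substitution.

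First I would select $\ep_2$. Applicability of Theorem~\ref{rough} requires $\ep_2\le\ep_1$, and applicability of Proposition~\ref{proptetr} requires the $L^2$ bound of \eqref{e1.1} to be at most $\epsi_1$. Since $t\mapsto|\log t|^{-1/7}$ is increasing on $(0,1)$, the right-hand side of \eqref{e1.1} is maximised over $\{\|\Lambda_0-\Lambda_1\|_\star\le\ep_2\}$ at $\ep_2$, so it suffices to impose
\[
   C_2\sqrt{N_0}\,|\log\ep_2|^{-1/7}\le\epsi_1 ,
\]
which holds whenever $\ep_2\le\exp\!\big(-(C_2\sqrt{N_0}/\epsi_1)^{7}\big)$. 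I would therefore set
\[
   \ep_2=\min\Big\{\ep_1,\ \exp\!\big(-(C_2\sqrt{N_0}/\epsi_1)^{7}\big)\Big\},
\]
a quantity depending only on the a priori data and satisfying $\ep_2<1$.

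Assuming $\|\Lambda_0-\Lambda_1\|_\star\le\ep_2$, Theorem~\ref{rough} yields \eqref{e1.1}, and the above choice guarantees $\|q^{(0)}-q^{(1)}\|_{L^2(\om)}\le\epsi_1$. Proposition~\ref{proptetr} then gives $N=M$, the relabelling with $q_j^{(0)}=q_j^{(1)}$, and the bound \eqref{P1.4}. Substituting \eqref{e1.1} into \eqref{P1.4}, and using $(C_2\sqrt{N_0})^{2/3}=C_2^{2/3}N_0^{1/3}$ together with $-\tfrac17\cdot\tfrac23=-\tfrac{2}{21}$, reproduces precisely \eqref{corod}.

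I do not anticipate any real obstacle: the estimate is a formal composition of two established inequalities. The only point demanding care is the monotonicity argument that absorbs the $L^2$ threshold into the definition of $\ep_2$, ensuring that both hypotheses are active simultaneously on $\{\|\Lambda_0-\Lambda_1\|_\star\le\ep_2\}$.
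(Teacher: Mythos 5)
Your proposal is correct and is exactly the composition the paper intends: the text introduces Corollary~\ref{cor1} with the single phrase ``Combining Theorem~\ref{rough} and Proposition~\ref{proptetr}'', and your explicit choice of $\ep_2$, the monotonicity of $t\mapsto|\log t|^{-1/7}$ on $(0,1)$, and the exponent bookkeeping $(C_2\sqrt{N_0})^{2/3}=(C_2^2N_0)^{1/3}$, $\tfrac17\cdot\tfrac23=\tfrac{2}{21}$ fill in precisely what the authors left implicit. The only cosmetic point is that Theorem~\ref{rough} is stated with the strict inequality $\|\Lambda_{q^{(0)}}-\Lambda_{q^{(1)}}\|_\star<\ep_1$, so one should take, say, $\ep_2=\min\{\ep_1/2,\exp(-(C_2\sqrt{N_0}/\epsi_1)^7)\}$ to avoid the boundary case.
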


\section{Geometric estimates, construction of an intermediate
         partition and augmenting the domain}
\label{sec:4}

Here, we map the information on the Haussdorff distance of tetrahedra
in information on the distance between vertices of these tetrahedra.
It is straightforward to see that if $T^{(k)}$, $k=0,1$, are
tetrahedra generated by vertices $P_i^{(k)}$, $i=1,2,3,4$, that then
\begin{equation}\label{geo1}
   d_\mathcal{H}(T^{(0)},T^{(1)})
   \leq \min_\wp \max_{1 \leq i \leq 4}
           \left|P_i^{(0)}-P_{\wp(i)}^{(1)}\right|,
\end{equation}
where $\wp$ denotes a permutation on the set $\{1, 2, 3, 4\}$.
Moreover, if $T^{(k)}\subset B_R(0)$ and satisfies assumption
\eqref{1.5.5} for $k=0,1$, then there exists a positive constant
$A_1$, depending on $R$ and $r_1$ only, such that
\begin{equation}\label{geo2}
    \min_\wp \max_{1 \leq i \leq 4}
          \left|P_i^{(0)}-P_{\wp(i)}^{(1)}\right|
    \leq A_1 d_\mathcal{H}(T^{(0)},T^{(1)}).
\end{equation}
Using Corollary \ref{cor1} we then obtain

\begin{prop}\label{vic}
Under the assumptions of Theorem \ref{rough}, there is a positive
constant $\ep_3<1$ such that if
\[
   \|\Lambda_{q^{(0)}} - \Lambda_{q^{(1)}}\|_\star \leq \ep_3
\]
then for every vertex $P^{(0)}_{j,i}$ of $T_j^{(0)}$ (with
$i=1,2,3,4$) there is a unique vertex $P^{(1)}_{j,i}$ of $T_j^{(1)}$
such that
\begin{equation}\label{P7.2}
   d(P^{(0)}_{j,i},P^{(1)}_{j,i}) \leq \frac{d_1}{4}
\end{equation}
for $d_1$ as in \ref{2.1}.
\end{prop}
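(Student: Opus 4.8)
The plan is to read off the conclusion from Corollary~\ref{cor1} and the geometric comparison \eqref{geo2}, using the vertex-separation bound \eqref{2.1} to upgrade an approximate matching to a unique one. First I would impose $\ep_3 \leq \ep_2$ so that, whenever $\|\Lambda_{q^{(0)}} - \Lambda_{q^{(1)}}\|_\star \leq \ep_3$, Corollary~\ref{cor1} applies: we have $N = M$, the tetrahedra may be reindexed so that $q_j^{(0)} = q_j^{(1)}$, and the Hausdorff bound \eqref{corod} holds for each matched pair $(T_j^{(0)}, T_j^{(1)})$.

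The next step is to convert the Hausdorff estimate into an estimate on vertices. Since each tetrahedron lies in $B_R(0)$ and satisfies \eqref{1.5.5}, I would apply \eqref{geo2} to the pair $(T_j^{(0)}, T_j^{(1)})$ to obtain a permutation $\wp$ of $\{1,2,3,4\}$ with
\[
   \max_{1 \leq i \leq 4} \left|P_{j,i}^{(0)} - P_{j,\wp(i)}^{(1)}\right|
   \leq A_1\, d_\mathcal{H}(T_j^{(0)}, T_j^{(1)})
   \leq A_1 \left(\frac{C_2^2 N_0}{c_0^2 C_1}\right)^{1/3}
        \left|\log \|\Lambda_{q^{(0)}} - \Lambda_{q^{(1)}}\|_\star\right|^{-2/21}.
\]
Relabeling the vertices of $T_j^{(1)}$ via $P_{j,i}^{(1)} := P_{j,\wp(i)}^{(1)}$, each vertex $P_{j,i}^{(0)}$ receives a partner within this distance. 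Because the right-hand side decreases to zero as $\|\Lambda_{q^{(0)}} - \Lambda_{q^{(1)}}\|_\star \to 0$, I would then fix $\ep_3 \leq \ep_2$ small enough that
\[
   A_1 \left(\frac{C_2^2 N_0}{c_0^2 C_1}\right)^{1/3}
        |\log \ep_3|^{-2/21} \leq \frac{d_1}{4},
\]
which gives $d(P_{j,i}^{(0)}, P_{j,i}^{(1)}) \leq d_1/4$; note that $\ep_3$ depends only on the a priori data since all the constants above do.

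It remains to prove uniqueness, which is the only genuinely new content and the step that dictates the threshold $d_1/4$. If two distinct vertices $P, P'$ of $T_j^{(1)}$ both lay within distance $d_1/4$ of $P_{j,i}^{(0)}$, the triangle inequality would give $|P - P'| \leq d_1/2 < d_1$, contradicting the lower bound $d_1$ on inter-vertex distances within a single tetrahedron from \eqref{2.1}. Hence the partner is unique. I do not expect a serious obstacle here: every step is a direct application of a previously established estimate, and the only point requiring care is the bookkeeping that keeps the constant $A_1$ --- and therefore $\ep_3$ --- dependent on $R$ and $r_1$ alone, which is guaranteed by the nondegeneracy in Remark~\ref{rem1}.
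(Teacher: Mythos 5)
Your proposal is correct and follows essentially the same route as the paper: apply Corollary~\ref{cor1} and the geometric comparison \eqref{geo2}, then shrink $\ep_3$ so that $A_1\left(\frac{C_2^2 N_0}{c_0^2 C_1}\right)^{1/3}|\log \ep_3|^{-2/21} < \frac{d_1}{4}$. The paper leaves the uniqueness step implicit ("the statement follows"), whereas you spell out the triangle-inequality argument using the vertex separation $d_1$ from \eqref{2.1}; this is exactly the intended justification.
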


\begin{proof}
It is sufficient to consider $\ep_3<1$, such that
\[
   A_1 \left(\frac{C_2^2 N_0}{c_0^2C_1}\right)^{1/3}
       \left|\log \left(\ep_3\right)\right|^{-2/21}
              < \frac{d_1}{4},
\]
and the statement follows.
\end{proof}

We introduce a deformation of the tetrahedra forming the partition
of $\om$. To this end, for each $j \in \{1,\ldots,N\}$, we define
tetrahedra $T_j^{(t)}$ by its vertices,
\begin{equation}\label{vertici}
   P_{j,i}^{(t)} = P_{j,i}^{(0)} + t v_{j,i}\text{ for } t \in [0,1],
\end{equation}
where
\begin{equation}\label{v}
   v_{j,i} = P_{j,i}^{(1)} - P_{j,i}^{(0)}.
\end{equation}
The resulting partition $\{ T_j^{(t)} \}_j$ is a regular partition of
$\om$ satisfying condition \eqref{1.5.5}. We point out that, by
\eqref{geo1} and \eqref{geo2}, there is a positive constant $A_2>1$
such that
\begin{equation}\label{s1}
   A_2^{-1}\left(\sum_{i=1}^4|v_{j,i}|^2\right)^{1/2}
     \leq d_\mathcal{H}\left(T_j^{(0)},T_j^{(1)}\right)
             \leq A_2\left(\sum_{i=1}^4|v_{j,i}|^2\right)^{1/2}.
\end{equation}

We define
\[
   q^{(t)}=\sum_{j=1}^Nq_j\chi_{T_j^{(t)}},
\]
where we denoted by $q_j=q_j^{(0)}=q_j^{(1)}$. A suggestion of
Alessandrini ( \cite{A-personal-communication}) allows us to avoid the
assumption thet $q$ is known on $\partial\Omega$.  To this aim we
extend our domain and introduce a regular domain $\tilde{\om}$
containing $\om$; we extend each potential $q^{(t)}$, for $t \in
[0,1]$, to $\tilde{\om}$ with the same constant value,
$\tilde{q}_0$. The particular choice of value $\tilde{q}_0$ for this
extension does not matter, as long as we are able to ensure
well-posedness of the corresponding Dirichlet problem. For this reason
we choose a special value. We take $\tilde{R} = \frac{2}{\sqrt{3}} R$,
so that
\begin{equation}\label{a1}
   \lambda_1(B_{\tilde{R}}) = \frac{3}{4}\lambda_1(B_R),
\end{equation}
and choose
\begin{equation}\label{omegatilde}
   \tilde{\om} = B_{\tilde{R}}(0).
\end{equation}
We then define
\begin{equation}\label{a2}
   \tilde{q}^{(t)} = \tilde{q}_0
         + (q^{(t)}-\tilde{q}_0) \chi_\om\text{ for }t \in [0,1],
\end{equation}
with $\tilde{q}_0 = Q_0$ (cf.~\eqref{1.2}). For $\omega \leq \omun$
and $t \in [0,1]$, we have
\[
   \left|\omega^2\tilde{q}^{(t)}\right|
   \leq \omun^2 Q_0
   \leq \frac{1}{2} \, \lambda_1(B_R)
   = \frac{2}{3}\,\lambda_1(\omti),
\]
cf.~\eqref{a1} and \eqref{3.5}, whence the Dirichlet problem
\begin{equation}\label{a3}
\left\{\begin{array}{rcl}
   \Delta u+\omega^2 \tilde{q}^{(t)} u &=& 0\text{ in }\omti,
\\
   u &=& \phi \text{ on }\der \omti,
\end{array}\right.
\end{equation}
has a unique solution $u \in H^{1}(\omti)$ for every $\phi \in
H^{1/2}(\der \omti)$. Thus the one-parameter family of
Dirichlet-to-Neumann maps,
\begin{equation}\label{Lambdat}
   \tilde{\Lambda}_t = \Lambda_{\tilde{q}^{(t)}},
   \text{ for }t \in [0,1]
\end{equation}
is well defined in $\mathcal{L}(H^{1/2}(\der \omti),H^{-1/2}(\der
\omti))$. We denote the norm in this space by $\|T\|_{\tilde{\star}}$.

To proceed, we take $\phi, \psi \in H^{1/2}(\der\omti)$ and let
$\tilde{u}_0$ and $\tilde{u}_1$ be the solutions to
\begin{equation*}
    \left\{\begin{array}{rcl}
             \Delta \tilde{u}_0+\omega^2 \tilde{q}^{(0)} \tilde{u}_0 & = & 0\text{ in }\omti,\\
             \tilde{u}_0 & = & \phi \text{ on }\der \omti,
           \end{array}
               \right. \text{ and }
 \left\{\begin{array}{rcl}
             \Delta \tilde{u}_1+\omega^2 \tilde{q}^{(1)}\tilde{u}_1 & = & 0\text{ in }\omti,\\
             \tilde{u}_1 & = & \psi \text{ on }\der \omti.
           \end{array}
    \right.
\end{equation*}
We then use Alessandrini's identity and write
\begin{eqnarray*}
&& \langle(\tilde{\Lambda}_1 - \tilde{\Lambda}_0)(\phi),\psi\rangle
   = \int_{\omti} (\tilde{q}^{(1)} - \tilde{q}^{(0)})
                \tilde{u}_0 \tilde{u}_1 dx
   = \int_{\om} (q^{(1)} - q^{(0)}) \tilde{u}_0 \tilde{u}_1 dx
\\
&& = \langle(\Lambda_1-\Lambda_0)(\tilde{u}_0|_{\der\om}),
        {\tilde{u}_1}|_{\der\om}\rangle
   \leq \|\Lambda_1-\Lambda_0\|_\star
        \|\tilde{u}_0\|_{H^{1/2}(\der\om)}
            \|\tilde{u}_1\|_{H^{1/2}(\der\om)}.
\end{eqnarray*}
Moreover, by trace and regularity estimates, we have
\begin{equation*}
   \|\tilde{u}_k\|_{H^{1/2}(\der\om)}
   \leq C \|\tilde{u}_k\|_{H^{1}(\omti)}
   \leq C \|\tilde{u}_k\|_{H^{1/2}(\der\omti)}\text{ for }k=0,1,
\end{equation*}
where $C$ depends on the a priori data. We have then shown that
\begin{equation}\label{a5}
   \|\tilde{\Lambda}_1-\tilde{\Lambda}_0\|_{\tilde{\star}}
               \leq C_3 \|\Lambda_1-\Lambda_0\|_\star.
\end{equation}

\section{Proof of Lipschitz stability}
\label{sec:5}

In this section, we give the proof of Lipschitz stability starting
from the logarithmic estimate obtained in Corollary \ref{cor1}. We
split the proof into three steps:
\begin{description}
\item[First step.] We show that for any pair of functions $\phi$ and
  $\psi$ in $H^{1/2}(\der\omti)$, the function
\[
   \mathcal{F}(t,\phi,\psi)
         = \langle \tilde{\Lambda}_t(\phi),\psi \rangle
\]
is differentiable.
\item[Second step.] We show that there is a positive constant $L_1$
  and a number $\alpha\in(0,1)$ depending on the a-priori data such that for any $\phi$ and $\psi$ in
  $H^{1/2}(\der\omti)$,
\begin{equation}\label{secondstep}
   \left|\frac{d}{dt} \mathcal{F}(t,\phi,\psi)
      -\frac{d}{dt}\mathcal{F}(t,\phi,\psi)_{|_{t=0}}\right|
   \leq L_1 d_T^{1+\alpha}
      \|\phi\|_{H^{1/2}(\der\omti)}\|\psi\|_{H^{1/2}(\der\omti)}.
\end{equation}
\item[Third step.] Finally, we prove that there is a positive constant
  $m_1$ such that, for special choices of non-zero functions $\phi_0$
  and $\psi_0$, we have
\begin{equation}\label{thirdstep}
   \left|\frac{d}{dt}\mathcal{F}(t,\phi_0,\psi_0)_{|_{t=0}}\right|
   \geq m_1 d_T\|\phi_0\|_{H^{1/2}(\der\omti)}
                    \|\psi_0\|_{H^{1/2}(\der\omti)}.
\end{equation}
\end{description}
Here, $d_T = \sum_{j=1}^N
d_\mathcal{H}\left(T_j^{(0)},T_j^{(1)}\right)$.

Once these three steps have been proven we conclude that
\begin{eqnarray*}
&& \left| \langle(\tilde{\Lambda}_1
      - \tilde{\Lambda}_0)(\phi_0),\psi_0 \rangle\right|
   = \left|\mathcal{F}(1,\phi_0,\psi_0)
      - \mathcal{F}(0,\phi_0,\psi_0)\right|
   = \left| \int_0^1
     \frac{d}{dt}\mathcal{F}(t,\phi_0,\psi_0)\right|
\\
&& \geq \left|
      \frac{d}{dt}\mathcal{F}(t,\phi_0,\psi_0)_{|_{t=0}}\right|
   - \int_0^1 \left| \frac{d}{dt} \mathcal{F}(t,\phi_0,\psi_0)
     - \frac{d}{dt}\mathcal{F}(t,\phi_0,\psi_0)_{|_{t=0}}\right|
\\[0.2cm]
&& \geq \|\phi_0\|_{H^{1/2}} \|\psi_0\|_{H^{1/2}} d_T
                     \left(m_1- L_1d_T^{\alpha}\right),
\end{eqnarray*}
that is,
\begin{equation}\label{s2}
   \|\tilde{\Lambda}_1-\tilde{\Lambda}_0\|_\star
           \geq d_T\left(m_1 - L_1d_T^{\alpha}\right).
\end{equation}
By Corollary \ref{cor1}, there exists a positive constant $\ep_0 \leq
\ep_3$ such that, if
\[
   \|\Lambda_1-\Lambda_0\|_\star \leq \ep_0
\]
then
\[
   \left(m_1 - L_1 d_T^{\alpha}\right) \geq \frac{m_1}{2}
\]
and, hence, by \eqref{a5}
\[
   d_T \leq \frac{m_1}{2}
     \|\tilde{\Lambda}_1-\tilde{\Lambda}_0\|_{\tilde{\star}}
   \leq \frac{m_1 C_3}{2} \|\Lambda_1-\Lambda_0\|_{\star},
\]
which implies \eqref{5.7}.

\subsection{First step: Differentiability of
                        $\mathcal{F}(t,\phi,\psi)$}

Let $\phi, \psi \in H^{1/2}(\der\omti)$ and let $t_0 \in [0,1]$. For
$h \neq 0$ such that $t_0 + h \in[0,1]$ we introduce the finite
difference
\begin{equation}\label{d1}
   R(h) = \frac{1}{h}
   \left(\mathcal{F}(t_0+h,\phi,\psi)-\mathcal{F}(t_0,\phi,\psi)
         \right).
\end{equation}
For $t \in [0,1]$ fixed, we let $u(x;t)$ and $v(x;t)$ be the (unique)
solutions in $H^1(\omti)$ to the boundary value problems,
\[
   \left\{\begin{array}{rcl}
   \Delta u(x;t)+\omega^2\tilde{q}^{(t)}(x)u(x;t)
         &=& 0\text{ for }x\in\omti,
\\
   u(x;t)&=&\phi(x)\text{ for }x\in\der\omti
   \end{array}\right.
\]
and
\[
  \left\{\begin{array}{rcl}
  \Delta v(x;t)+\omega^2\tilde{q}^{(t)}(x)v(x;t)
        &=& 0\text{ for }x\in\omti,
\\
  v(x;t)&=&\psi(x)\text{ for }x\in\der\omti.
  \end{array}\right.
\]
Applying Alessandrini's identity and the definition of
$\tilde{q}^{(t)}$, we find that
\begin{eqnarray*}
   R(h) &=&\frac{\omega^2}{h}
   \int_\om \left(q^{(t_0+h)}(x)-q^{(t_0)}(x)\right)
            u(x;t_0+h)v(x;t_0) dx
\\
        &=& \frac{\omega^2}{h} \sum_{j=1}^N q_j
   \left\{\int_{T_j^{(t_0+h)}} u(x;t_0+h)v(x;t_0)dx-
      \int_{T_j^{(t_0)}} u(x;t_0+h)v(x;t_0)dx\right\}.
\end{eqnarray*}

For any index $j\in\{1,\ldots,N\}$ we define $\Phi_{j,t_0} :\ \RR^3
\to \RR^3$ as the affine map with the property that
\begin{equation}\label{Phi}
   \Phi_{j,t_0}(P^{(0)}_{j,i} + t_0 v_{j,i}) = v_{j,i}\text{ for
   }i=1,2,3,4,
\end{equation} 
where $P^{(0)}_{j,i}$ is defined in \eqref{vertici} and $v_{j,i}$ in
\eqref{v}. We let
\begin{equation}\label{Fj}
   F^{t_0}_{j,\tau}(x) = x + \tau \Phi_{j,t_0}(x)
\end{equation}
so that $F^{t_0}_{j,\tau}(T_j^{(t_0)}) = T_j^{(t_0+\tau)}$. We note
that with assumption \eqref{2.1}
\begin{equation}\label{stimePhi}
   \left|\Phi_{j,t_0}\right|
   + \left|\operatorname{div}\Phi_{j,t_0}\right| \leq C(R,r_1).
\end{equation}

By using $F^{t_0}_{j,h}$ as a change of variable, we get
\begin{equation}\label{R2}
   R(h) = \frac{\omega^2}{h} \sum_{j=1}^N q_j
                  \int_{T_j^{(t_0)}} \mu_j(x,t_0)dx,
\end{equation}
where
\begin{equation}\label{mu}
   \mu_j(x,t_0) = u(F^{t_0}_{j,h}(x);t_0+h)
            v(F^{t_0}_{j,h}(x);t_0)
   |\det DF^{t_0}_{j,h}(x)| - u(x;t_0+h)v(x;t_0).
\end{equation}
We proceed with the analysis on each tetrahedron $T_j^{(t_0)}$ in the
same way and for simplicity of notation drop the index $j$.

By standard regularity estimates for solutions of elliptic equations,
we know that $u(\cdot,t)$ and $v(\cdot,t)$ belong to
$C^{1,\alpha}\left(\om\right)$ for some $\alpha\in(0,1)$ and that
\begin{equation}\label{uC1alpha}
   \|u(\cdot;t)\|_{C^{1,\alpha}\left(\om\right)}
                    \leq C \|\phi\|_{H^{1/2}(\der\omti)}, 
\end{equation}
\begin{equation}\label{vC1alpha}
   \|v(\cdot,t)\|_{C^{1,\alpha}\left(\om\right)}
                    \leq C \|\psi\|_{H^{1/2}(\der\omti)},
\end{equation}
where $C$ depends on the a priori data. Thus,
\begin{equation}\label{mu1}
   u(F^{t_0}_{h}(x);t_0+h) - u(x;t_0+h)
      = h \nabla u(x;t_0+h) \cdot \Phi_{t_0}(x)+\eta_1(h).
\end{equation}
For some $\xi$ between $x$ and $F^{t_0}_h(x)=x+h\Phi_{t_0}(x)$,
\begin{eqnarray}\label{mu2}
|\eta_1(h)|&=&\left|h\nabla u(\xi;t_0+h)\cdot \Phi_{t_0}(x)-h\nabla u(x;t_0+h)\cdot \Phi_{t_0}(x)\right|
\nonumber\\
& \leq &|h| \|u(\cdot,t_0+h)\|_{C^{1,\alpha}\left(\om\right)}|\xi-x|^\alpha\left|\Phi_{t_0}(x)\right|
\nonumber\\
& \leq & C\|\phi\|_{H^{1/2}(\der\omti)}\left(|h|\right)^{1+\alpha}\left|\Phi_{t_0}(x)\right|
\nonumber\\
& \leq & C\|\phi\|_{H^{1/2}(\der\omti)}|h|^{1+\alpha},
\end{eqnarray}
where we used \eqref{stimePhi} in the last estimate. A similar
estimate holds for $v(F^{t_0}_{h}(x);t_0+h)-v(x;t_0+h)$. Moreover, by
direct calculation,
\begin{equation}\label{mu3}
   \left|\det DF^{t_0}_{h}(x)\right|
   = 1 + h \, \operatorname{div}\left(\Phi_{t_0}\right)
       + o(h).
\end{equation}
Using \eqref{mu}, \eqref{mu1}, \eqref{mu2} and \eqref{mu3}, we get
\begin{equation}\label{mu4}
   \mu(x,t_0)=h \, \operatorname{div}\left(u(x;t_0+h)
        v(x;t_0) \Phi_{t_0}(x)\right) + \eta(h)
\end{equation}
with
\begin{equation}\label{mu5}
   |\eta(h)| \leq C |h|^{1+\alpha},
\end{equation}
where $C$ depends on the a priori data and on
$\|\phi\|_{H^{1/2}(\der\omti)}$ and
$\|\psi\|_{H^{1/2}(\der\omti)}$. By inserting estimates \eqref{mu4}
and \eqref{mu5} into \eqref{R2} we obtain
\begin{equation}
   R(h) = \omega^2 \sum_{j=1}^N q_j \int_{T_j^{(t_0)}}
      \operatorname{div}\left(u(x;t_0+h) v(x;t_0)
                 \Phi_{j,t_0}(x)\right)dx+O(h^\alpha).
\end{equation}

Applying usual energy estimates, we find that
\begin{equation}\label{ee}
   \|u(\cdot,t_0+h)-u(\cdot,t_0)\|_{H^1(\om)}
   \leq C \omega^2 \|q^{(t_0+h)}-q^{(t_0)}\|_{L^2(\om)}
            \|\phi\|_{H^{1/2}(\der\omti)}
\end{equation}
and, hence,
\[
   \lim_{h\to 0}R(h) = \omega^2 \sum_{j=1}^N
         q_j \int_{T_j^{(t_0)}}
   \operatorname{div}\left(u(x;t_0) v(x;t_0)
                        \Phi_{j,t_0}(x)\right) dx.
\]
This implies that $\mathcal{F}(t,\phi,\psi)$ is differentiable and
that
\begin{equation}\label{deriv}
   \frac{d}{dt} \langle \tilde{\Lambda}_t(\phi),\psi \rangle_{t=t_0}
   = \omega^2\sum_{j=1}^N q_j\int_{T_j^{(t_0)}}
   \operatorname{div}\left(u(x;t_0) v(x;t_0)
                        \Phi_{j,t_0}(x)\right) dx.
\end{equation}
Using the divergence theorem, we obtain
\begin{equation}\label{deriv2}
   \frac{d}{dt} \langle \Lambda_t(\phi),\psi \rangle_{t=t_0}
   = \omega^2\sum_{j=1}^N q_j
   \int_{\der T_j^{(t_0)}} u(x;t_0) v(x;t_0)
           \left(\Phi_{j,t_0}(x)\cdot\nu_j\right) d\sigma_x,
\end{equation}
where $\nu_j$ is the exterior normal to $\der T_j^{(t_0)}$ and
$d\sigma_x$ is the surface measure.

\subsection{Second step: Behavior of
      $\frac{d}{dt}\mathcal{F}(t,\phi,\psi)$ with respect to $t$}

In this subsection, we estimate, for any fixed $t \in [0,1]$, the
quantity
\[
   \tilde{J} = \frac{d}{dt}\mathcal{F}(t,\phi,\psi)
         - \frac{d}{dt}\mathcal{F}(t,\phi,\psi)|_{t=0}.
\]
By \eqref{deriv}, we can write
\begin{equation}\label{Jtilde}
   \tilde{J} = \omega^2\sum_{j=1}^N q_j J_j
\end{equation}
where
\[
   J_j = \int_{T_j^{(t)}}
    \operatorname{div}\left(u(x;t)v(x;t)\Phi_{j,t}(x)\right) dx
       - \int_{T_j^{(0)}}
    \operatorname{div}\left(u(x;0)v(x;0)\Phi_{j,0}(x)\right) dx.
\]
We write
\begin{equation}\label{Vj}
   V_j = \sum_{i=1}^4 \left|v_{j,i}\right|.
\end{equation}
Since, here, we focus on each tetrahedron separately, we drop the
index $j$ from $J_j$, $T_j^{(t)}$, $T_j^{(0)}$, $\Phi_{j,t}$,
$\Phi_{j,0}$, and $V_j$, again, for simplicity of notation. We use the
change of variable $F_t(x)=F_{j,t}$ as defined in \eqref{Fj}, and get
\[
   J=\int_{T^{(0)}}\left(\operatorname{div}_y\left(
        u(y;t)v(y;t)\Phi_{t}(y)\right)_{y=F_t(x)}
   \left|\det DF_t(x)\right|
   -\operatorname{div}_x\left(u(x;0)v(x;0)\Phi_{0}(x)\right)
                      \right)dx
\]
We introduce the quantity
\[
   G(y,t) =
      \operatorname{div}_y\left(u(y;t)v(y;t)\Phi_t(y)\right),
\]
and estimate $J$,
\begin{eqnarray*}
   J &=& \left|\int_{T^{(0)}}
         \left(G(F_t(x),t)\left|\det DF_t(x)\right|
                          -G(x,0)\right) dx\right|
\nonumber\\
   &\leq& \int_{T^{(0)}}
    \! \left|G(F_t(x),t)-G(x,0)\right| \left|\det DF_t(x)\right| dx
    + \int_{T^{(0)}} \!
       \left|G(x,0)\right| \left|\det DF_t(x)-1\right| dx
\nonumber\\
   &=& J^{(1)}+J^{(2)},
\end{eqnarray*}
in which
\begin{eqnarray*}
   J^{(1)} &\leq&
   C\left\{\int_{T^{(0)}}\left|\nabla_y \left(u(y;t)v(y;t)\right)_{|_{y=F_t(x)}}-\nabla\left(u(x;0)v(x;0)\right)\right| \left|\Phi_0(x)\right|dx \right.
\\
&&+\left.\int_{T^{(0)}} \left|u(F_t(x);t)v(F_t(x);t)\left(\operatorname{div}\Phi_t(y)\right)_{|_{y=F_t(x)}}- u(x;0)v(x;0)\left(\operatorname{div}\Phi_0(x)\right)\right|dx\right\},
\end{eqnarray*}
using that $\Phi_t(F_t(x)) = \Phi_0(x)$. A straightforward calculation
gives
\[
   \left(\operatorname{div} \Phi_t(y)\right)|_{y=F_t(x)}
   = \operatorname{div} \Phi_0(x) - t \,
     \operatorname{tr}\left(D\Phi_t(F_t(x))D\Phi_0(x)\right).
\]
Hence, writing
\[
   w(y;t) = u(y;t)v(y;t)
\]
we obtain the estimate
\begin{eqnarray*}
J^{(1)}&\leq& C\left\{ \int_{T^{(0)}}\left|\nabla w(F_t(x);t)-\nabla w(x;0)\right|\left|\Phi_0(x)\right|dx\right.
\\
&&\quad+\int_{T^{(0)}} |w(F_t(x);t)-w(x;0)|\left|\operatorname{div}\Phi_0(x)\right|dx
\\
&&\left.\quad\quad+t\int_{T^{(0)}}\left|w(F_t(x);t)\right|
\left|\text{tr}\left(D\Phi_t(F_t(x))D\Phi_0(x)\right)\right|dx\right\}.
\end{eqnarray*}
Using \eqref{Phi} and \eqref{Vj}, we find that
\begin{equation}\label{Phiest}
   \left|\Phi_t(x)\right|
        + \left|D\Phi_t(x)\right| \leq C V
\end{equation}
and, hence,
\begin{eqnarray*}
   J^{(1)} &\leq& C V
   \left\{ \int_{T^{(0)}}
   \left|\nabla w(F_t(x);t) - \nabla w(x;0)\right|
         + \left|w(F_t(x);t) - w(x;0)\right| dx \right\}
\\
&& + C V^2 \|\phi\|_{H^{1/2}(\der\omti)}
                    \|\psi\|_{H^{1/2}(\der\omti)}.
\end{eqnarray*}
We analyze the term containing $\nabla w$. By combining
\eqref{uC1alpha}, \eqref{vC1alpha} and \eqref{ee} and using the fact
that $|F_t(x)-x| = t \, |\Phi_t(x)| \leq CV$, we obtain
\begin{eqnarray*}
&&\int_{T^{(0)}} \left|\nabla w(F_t(x);t)-\nabla w(x;0)\right|dx
\\
&& \quad\leq
    \int_{T^{(0)}} \left(\left|\nabla w(F_t(x);t)-\nabla w(x;t)\right|+\left|\nabla w(x;t)-\nabla w(x;0)\right|\right)dx\\
&& \quad\leq
    C \|\phi\|_{H^{1/2}(\der\omti)}\|\psi\|_{H^{1/2}(\der\omti)}\left(V^\alpha+\omega^2\|q^{(t)}-q^{(0)}\|_{L^2(\om)}\right).
\end{eqnarray*}
Then, by \eqref{1.2}, \eqref{3.5} and \eqref{s1},
\[
   \omega^2 \|q^{(t)}-q^{(0)}\|_{L^2(\om)} \leq C \sum_{j=1}^N V_j
\]
and, so,
\[\int_{T^{(0)}}\left|\nabla w(F_t(x);t)-\nabla w(x;0)\right|dx\leq  C\|\phi\|_{H^{1/2}(\der\omti)}\|\psi\|_{H^{1/2}(\der\omti)}\left(V^\alpha+\sum_{j=1}^NV_j\right).\]
An analogous estimate holds for $\int_{T^{(0)}}\left|w(F_t(x);t)-\nabla w(x;0)\right|dx$.
Finally, by recalling \eqref{Vj}, we obtain
\begin{equation}\label{J1a}
J^{(1)}\leq
C\|\phi\|_{H^{1/2}}\|\psi\|_{H^{1/2}}\left(\sum_{j=1}^NV_j\right)^{1+\alpha}.\end{equation}
The integral, $J^{(2)}$, can be estimated in a similar way by observing
that, by \eqref{uC1alpha}, \eqref{vC1alpha} and \eqref{Phiest},
\begin{equation}\label{J2a}
\left|\operatorname{div}\left(u(x,0)v(x,0)\Phi_0(x)\right)\right|\leq  C\|\phi\|_{H^{1/2}(\der\omti)}\|\psi\|_{H^{1/2}(\der\omti)}V
\end{equation}
and, by \eqref{Fj} and \eqref{Phiest},
\begin{equation}\label{J2b}
   \left|\det DF_t(x)-1\right| \leq CV.
\end{equation}
By combining \eqref{Jtilde}, \eqref{J1a}, \eqref{J2a} and \eqref{J2b}
and adding up the contributions from all the tetrahedra,  we get
\[
   \tilde{J} \leq L_1  \|\phi\|_{H^{1/2}(\der\omti)}
         \|\psi\|_{H^{1/2}(\der\omti)}
       \left(\sum_{j=1}^NV_j\right)^{1+\alpha}
\]
and, by \eqref{Jtilde}, \eqref{1.2}, \eqref{3.5} and \eqref{s1}, we
finally arrive at estimate \eqref{secondstep} and conclude the proof
of second step.

\subsection{Third step: Lower bound of
            $\frac{d}{dt}\mathcal{F}(t,\phi,\psi)|_{t=0}$}

With \eqref{deriv2} the Gateaux derivative is given by
\[
   \frac{d}{dt}\mathcal{F}(t,\phi,\psi)|_{t=0}
   = \omega^2 \sum_{j=1}^N q_j
     \int_{\der T_j^{(0)}} u(x) w(x)
         \left(\Phi_{j,0}(x) \cdot \nu_j\right) d\sigma_x,
\]
where $u$ and $w$ solve problems
\[
  \left\{\begin{array}{rcl}
  \Delta u + \omega^2q^{(0)} u&=&0\text{ in }\omti,
\\
  u&=&\phi\text{ on }\der\omti.\end{array}\right.
\]
and
\[
  \left\{\begin{array}{rcl}
  \Delta w + \omega^2q^{(0)} w&=&0\text{ in }\omti,
\\
  w&=&\psi\text{ on }\der\omti.\end{array}\right.,
\]
respectively. We introduce
\begin{equation}\label{vnorm}
   \tilde{v}_{j,i} = \frac{v_{j,i}}{\sum_{j=1}^N V_j}
   \text{ for }j\in\{1,\ldots,N\},\quad i=1,2,3,4,
\end{equation}
where $V_j$ is defined as in \eqref{Vj}, and note that
\begin{equation}\label{normalizz}
    \sum_{j=1}^N \sum_{i=1}^4 |\tilde{v}_{j,i}| = 1
\end{equation}
We also let
\begin{equation}\label{phinorm}
    \tilde{\Phi}_j(x)=\frac{\Phi_{j,0}(x)}{\sum_{l=1}^NV_l},
\end{equation}
and consider the bilinear operator
\begin{equation}\label{L}
    \mathcal{G}(\phi,\psi) = \sum_{j=1}^N q_j
    \int_{\der T_j^{(0)}} u(x) w(x)
                       (\tilde{\Phi}_j(x)\cdot\nu_j)
\end{equation}
in $\mathcal{L}(H^{1/2}(\der \omti),H^{-1/2}(\der \omti))$. Now, for
every $\phi$ and $\psi$ in $H^{1/2}(\der \omti)$, we have
\begin{equation}\label{s2-5}\left|\mathcal{G}(\phi,\psi)\right|\leq m_0\|\phi\|_{H^{1/2}}\|\psi\|_{H^{1/2}},\end{equation}
where
\begin{equation}\label{m0}
   m_0 = \|\mathcal{G}\|_{\tilde{\star}}.
\end{equation}
We choose boundary values corresponding to CGO solutions: Let $\xi$ be
any vector in $\RR^3$ and let $\mu$ be a positive parameter to be
chosen later, and let $\zeta_0$ and $\zeta_1$ as in \eqref{e2.1}. We
form
\begin{equation}\label{uzerotilde}
    \tilde{u}_0 = e^{i x \cdot \zeta_0}(1 + \varphi_0(x))
\end{equation}
and
\begin{equation}\label{wzerotilde}
    \tilde{w}_0 = e^{i x \cdot \zeta_1}(1 + \varphi_1(x)),
\end{equation}
which are both solutions of the equation $\Delta u + \omega^2q^{(0)}u
= 0$ in $\omti$, such that
\begin{eqnarray}\label{e3.1bis}
  \|\varphi_k\|_{L^2(\omti)} &\leq& \frac{C_0 \omun^2 Q_0}{\mu}, 
\nonumber\\
&& \\
  \|\nabla \varphi_k\|_{L^2(\omti)} &\leq& C_0 \omun^2 Q_0,
\nonumber
\end{eqnarray}
and $\zeta_0+\zeta_1=\xi$. Substituting these functions into
\eqref{s2-5}, by \eqref{1.2} and \eqref{Phiest}, we get
\begin{multline}\label{s1-7}
   \left|\sum_{j=1}^N q_j
      \int_{\der T_j^{(0)}} e^{i x \cdot \xi}
          (\tilde{\Phi}_j(x) \nu_j)\right|
   \leq m_0 \|\phi\|_{H^{1/2}(\der\omti)}
               \|\psi\|_{H^{1/2}(\der\omti)}
\\[-0.2cm]
   + C \sum_{j=1}^N \int_{\der T_j^{(0)}}
     \left(|\varphi_0| + |\varphi_1| + |\varphi_0||\varphi_1|\right).
\end{multline}

We now estimate last term in \eqref{s1-7}. We recall the interpolation
estimate for $0<\tau<1$
\begin{equation}\label{s3-7}
    \|\varphi_k\|_{H^\tau(\omti)}\leq C\|\varphi_k\|^{1-\tau}_{L^2(\omti)}\left(\|\varphi_k\|_{L^2(\omti)}+\|\nabla\varphi_k\|_{L^2(\omti)}\right)^\tau
    \text{ for }k=0,1,
\end{equation}
and the trace estimate, for $1/2<\tau<1$,
\begin{equation}\label{s4-7}
     \|\varphi_k\|_{L^2(\der T_j^{(0)})}\leq \|\varphi_k\|_{H^{\tau-1/2}(\der T_j^{(0)})}\leq C_\tau \|\varphi_k\|_{H^\tau(\omti)},\text{ for }k=0,1.
\end{equation}
The estimates \eqref{s3-7} and \eqref{s4-7} combined with \eqref{e3.1bis} give, for $k=0,1$,
\begin{equation}\label{s1-8}
  \|\varphi_k\|_{L^2(\der T_j^{(0)} )}\leq C_\tau \mu^{\tau-1},
\end{equation}
and, hence,
\begin{equation}\label{ssnum-8}
\int_{\der T_j^{(0)}}\left(|\varphi_0|+|\varphi_1|+|\varphi_0||\varphi_1|\right)\leq C_\tau \mu^{2(\tau-1)}
\end{equation}
for any fixed $\tau \in (1/2,1)$. By using \eqref{s1-7},
\eqref{e4.05} and \eqref{ssnum-8} we have the estimate
\begin{equation}\label{s4-9}
   \left|\sum_{j=1}^N q_j
   \int_{\der T_j^{(0)}} e^{i x \cdot \xi}
         (\tilde{\Phi}_j(x) \nu_j)\right|
   \leq C \left(m_0 e^{C(|\xi|+\mu)} + \mu^{-2 (1-\tau)}\right).
\end{equation}
We write the integral on the left-hand side of \eqref{s4-9} in a
slightly different form. We denote by $\left \{ F_{k}\right \}
_{k=1}^{M_1}$ the collection of facets of tetrahedra. We note that the
set $\bigcup_{k=1}^{M_1}F_{k}$ contains special a priori information
which is implied by the a priori information on the mesh of
tetrahedra.

Each facet $F_{k}$ not contained on $\der\om$ belongs to two
tetrahedra and the outer normal directions with respect to these two
tetrahedra are opposite one to another. We denote by $\nu_k$ one of
these two directions and denote by $q_k^-$ the coefficient defined in
the tetrahedron where $\nu_k$ is pointing towards and $q_k^+$ the one
defined in the other tetrahedron. By assumption \eqref{3.4} and by
\eqref{1.3} we have that
\begin{equation}\label{contr}
   |q_k^+-q_k^-| \geq c_0.
\end{equation}
For any $k\in \{1,\ldots ,M_1\}$ we let
\begin{equation}\label{fk}
    f_{k}(x) = \begin{cases}
    0 & \text{ if }F_k\text{ is contained in }\der\om
\\
    \left(q_{k}^{+} - q_{k}^{-}\right)
           (\tilde{\Phi}_{k}(x) \cdot \nu_k) &
        \text{ otherwise.}\end{cases}
\end{equation}
We know that the $f_{k}$ are affine functions on each facet, $F_{k}$,
and that
\begin{equation}\label{ap1}
   \sum_{k=1}^{M_1} \left \Vert f_{k}\right \Vert_{
          H^{1/2}\left( F_{k}\right) }\leq E,
\end{equation}
where $E$ depends on a priori information. We denote by $H$ the
measure,
\[
   H = \sum_{k=1}^{M_1}h_k := \sum_{k=1}^{M_1} f_{k} d\sigma_{k},
\]
where $d\sigma _{k}$ is the surface element on $F_{k}$ for $k\in
\{1,\ldots ,M_1\}$. More precisely, each $h_k$ is defined as follows:
\[
   C_{0}^{0}\left(\mathbb{R}^{3}\right) \ni \phi
           \rightarrow \left \langle h_k,\phi \right \rangle
       = \int f_{k} \phi \, d\sigma_{k} \in \mathbb{R}.
\]

Estimate \eqref{s4-9} implies that
\begin{equation}
\label{Hh}
   \vert \widehat{H}(\xi ) \vert
                  \leq C \gamma\left(|\xi|,\mu,m_{0}\right),
\end{equation}
where
\begin{equation}\label{Hh1}
   \gamma\left(t,\mu,m_0\right)
       = m_{0}e^{C(t+\mu)}+\mu^{-2(1-\tau )}
                          \text{ for every }t>0,\mu>0.
\end{equation}
We estimate, for $s>1$,
\begin{equation}\label{h-s}
   \left( \int_{\mathbb{R}^{3}}\left(1+|\xi|^{2}\right)^{-s/2}
         \vert\widehat{H}(\xi )\vert^{2} d\xi \right)^{1/2}
   \leq \sum _{k=1}^{M_1} \left(\int_{\mathbb{R}^{3}}
   \left(1+|\xi|^{2}\right)^{-s/2}
          \vert\widehat{h}_{k}(\xi)\vert^{2}d\xi \right)^{1/2}.
\end{equation}
For each $k$ we write
\begin{eqnarray}\label{h-s2}
  \int_{\mathbb{R}^{3}} \left(1+|\xi|^{2}\right)^{-s/2}
        \vert\widehat{h}_{k}(\xi)\vert^{2} d\xi
  &=& \int_{|\xi|\leq 1} \left(1+|\xi|^{2}\right)^{-s/2}
        \vert\widehat{h}_{k}(\xi)\vert^{2} d\xi
\nonumber\\
&& \hspace*{-2.0cm}
   + \sum_{j=1}^\infty \int_{2^j\leq|\xi|\leq 2^{j+1}}
     \left(1+|\xi|^{2}\right)^{-s/2}
            \vert\widehat{h}_{k}(\xi)\vert^{2} d\xi
\nonumber\\[-0.1cm]
   &\leq& \int_{|\xi|\leq1} \vert\widehat{h}_{k}(\xi)\vert^{2}d\xi
     + \sum_{j=1}^\infty 2^{-js} \int_{|\xi| \leq 2^{j+1}}
                     \vert\widehat{h}_{k}(\xi)\vert^{2} d\xi.
\end{eqnarray}
Using \cite[Theorem 7.1.26, p.173]{Ho}, estimate \eqref{h-s2} gives
\begin{equation*}
   \int_{\mathbb{R}^{3}}\left(
1+|\xi |^{2}\right)^{-s/2} \vert\widehat{h}_{k}(\xi)\vert ^{2}d\xi \leq C\left(1+2\sum_{j=1}^\infty2^{-(s-1)j}\right)\int_{F_k}\vert f_k\vert^2d\sigma_k,
\end{equation*}
 and, by \eqref{ap1} and \eqref{h-s},
\begin{equation}\label{10}
   \left(\int_{\mathbb{R}^{3}} \left(1+|\xi|^{2}\right)^{-s/2}
   \vert\widehat{H}\left(\xi\right)\vert^{2} d\xi\right)^{1/2}
             \leq C E.
\end{equation}

We consider a single facet, for instance, the facet $F_{1}$. To
simplify the notation, we assume that $F_{1}\subset
\mathbb{R}^{2}\times \{0\}$ and that $0$ is a point of $F_{1}$ such
that $B_{2d}^{\prime }\left( 0\right) \subset F_{1}$ where $d$ depends
on the a priori information only.  We let $\eta \in C_{0}^{\infty
}\left( \mathbb{R}^{2}\right)$ such that $0 \leq \eta \leq 1$ and
$\eta =1$ on $B_{d}^{\prime }\left( 0\right)$.

We choose a $g_{1}\in H^{1}\left( \mathbb{R}^{3}\right) $ such that
\[
g_{1}(x^{\prime },0)=\left( \eta f_{1}\right) (x^{\prime })\text{%
, }x^{\prime }\in \mathbb{R}^{2},
\]%
\begin{equation}
\label{support}
\text{supp}g_{1}\cap F_{k}=\emptyset \text{, for }k\neq 1
\end{equation}
and
\begin{equation}
\label{extens1}
\left \Vert g_{1}\right \Vert _{H^{1}\left( \mathbb{R}^{3}\right)
}\leq CE,
\end{equation}
where $C$ depends on the a priori information only. Taking into
account (\ref{support}), we obtain
\begin{eqnarray*}
\int _{\mathbb{R}^{3}}\widehat{H}(\xi )\check{g}_1 (\xi )d\xi
&=&\int_{\mathbb{R}^{3}}d\xi \check{g}_1 (\xi
)\sum_{k=1}^{M_1}\int_{F_{k}}e^{ix\cdot \xi }f_{k}(x)d\sigma
_{k}
\\&=&\sum _{k=1}^{M_1}\int_{F_{k}}f_{k}(x)d\sigma
_{k}\int_{\mathbb{R}^{3}}e^{ix\cdot \xi }\check{g}_1 (\xi )d\xi
\\
&=&\sum_{k=1}^{M_1}\int_{F_{k}}f_{k}(x)
g_{1}(x)d\sigma _{k}=\int_{F_{1}}\eta \left \vert
f_{1}\right \vert ^{2}d\sigma _{1},
\end{eqnarray*}
that is,
\begin{equation}\label{20}
\int_{F_{1}}\eta \left \vert f_{1}\right \vert ^{2}d\sigma
_{1}=\int_{\mathbb{R}^{3}}\widehat{H}(\xi )\check{g}_1 (\xi )d\xi.
\end{equation}
Moreover by (\ref{extens1}) we have
\begin{equation}
\label{15}
\int_{\mathbb{R}^{3}}\left( 1+|\xi |^{2}\right) \left \vert \check{g}_1
(\xi )\right \vert ^{2}d\xi \leq CE^{2}.
\end{equation}%

We write
\begin{equation}
\label{27}
\int_{\mathbb{R}^{3}}\left \vert \widehat{H}(\xi )\check{g}_1 (\xi
)\right \vert d\xi=\int_{|\xi |\leq \rho }\left \vert \widehat{H}
(\xi )\check{g}_1 (\xi )\right \vert d\xi +\int_{|\xi |>\rho }\left \vert
\widehat{H}(\xi )\check{g}_1 (\xi )\right \vert d\xi 
\end{equation}
By (\ref{Hh}) and (\ref{15}) we have
\begin{eqnarray}\label{27.5}
&&\int_{|\xi |\leq \rho }\left \vert \widehat{H}
(\xi )\check{g}_1 (\xi )\right \vert d\xi\leq\gamma \left( \rho ,\mu,m_{0}\right)
\int_{|\xi |\leq \rho } \left \vert
\check{g}_1 (\xi )\right \vert d\xi\nonumber\\
&& \quad \leq C\gamma \left( \rho ,\mu,m_{0}\right) \left(\int_{|\xi |\leq \rho }\!\!\!
(1+|\xi|^2)^{-1}d\xi\right)^{1/2}\left(\int_{|\xi |\leq \rho }\!\!\!(1+|\xi|^2)\left\vert\check{g}_1 (\xi )\right \vert^2 d\xi\right)^{1/2}
\nonumber\\
&& \quad \leq C\gamma \left( \rho ,\mu,m_{0}\right)\sqrt{\rho}E
\end{eqnarray}
Using the Cauchy-Schwarz inequality and (\ref{10}) we have
\begin{eqnarray}
\label{30}\nonumber
&\int_{|\xi |>\rho }\left \vert \widehat{H}(\xi )\check{g}_1 (\xi
)\right \vert d\xi=
\int_{|\xi |>\rho }\left( 1+|\xi |^{2}\right)
^{-s/4}\left \vert \widehat{H}(\xi )\right \vert \left( 1+|\xi |^{2}\right)
^{s/4}\left \vert \check{g}_1 (\xi )\right \vert d\xi &
\\ \nonumber
&\leq \left( \int_{|\xi |>\rho }\left( 1+|\xi |^{2}\right)
^{-s/2}\left \vert \widehat{H}(\xi )\right \vert ^{2}d\xi \right) ^{1/2}\left(
\int_{|\xi |>\rho }\left( 1+|\xi |^{2}\right) ^{s/2}\left \vert
\check{g}_1 (\xi )\right \vert ^{2}d\xi \right) ^{1/2}&
\\
&\leq CE\left( \int_{|\xi |>\rho }\left( 1+|\xi |^{2}\right)
^{s/2}\left \vert \check{g}_1 (\xi )\right \vert ^{2}d\xi \right) ^{1/2}.&
\end{eqnarray}
Then, using (\ref{15}), we find that for $1<s<2$,
\begin{eqnarray}
\label{40}
&&\int_{|\xi |>\rho }\left( 1+|\xi |^{2}\right) ^{s/2}\left \vert
\check{g}_1 (\xi )\right \vert ^{2}d\xi
\\ \nonumber
&& \quad
   = \int_{|\xi |>\rho }\left(
1+|\xi |^{2}\right) ^{-\frac{2-s}{2}}\left( 1+|\xi |^{2}\right) \left \vert
\check{g}_1 (\xi )\right \vert ^{2}d\xi
\\ \nonumber
&& \quad
   \leq \left( 1+\rho ^{2}\right) ^{-\frac{2-s}{2}}\int_{\mathbb{R}
^{3}}\left( 1+|\xi |^{2}\right) \left \vert \check{g}_1 (\xi )\right \vert
^{2}d\xi \leq CE^{2}\rho ^{-(2-s)}
\end{eqnarray}
and with (\ref{30}) and (\ref{40}),
\begin{equation}
\int \limits_{|\xi |>\rho }\left \vert \widehat{H}(\xi )\check{g}_1 (\xi
)\right \vert d\xi \leq CE^2\rho ^{-\frac{2-s}{2}}.  \label{50}
\end{equation}
By (\ref{20}), (\ref{27}), \eqref{27.5} and (\ref{50}) we have
\begin{equation}\label{60}
\int_{F_{1}}\eta \left \vert f_{1}\right \vert ^{2}d\sigma _{1}\leq
CE\sqrt{\rho} \left( m_{0}e^{C\rho }e^{C\mu}+\mu^{-2(1-\tau )}\right) +CE^2\rho
^{-\frac{2-s}{2}}.
\end{equation}
We choose $\mu=\rho ^{1/(1-\tau )}$ and get, for every $\rho \geq 1$,
\begin{eqnarray}
\label{70}
&&\int_{F_{1}}\eta \left \vert f_{1}\right \vert ^{2}d\sigma _{1}\leq
CE\left( m_{0}\sqrt{\rho} e^{C\left( \rho +\rho ^{1/(1-\tau )}\right) }+\rho
^{-3/2}+E\rho ^{-\frac{2-s}{2}}\right)
\\ \nonumber
&& \quad
   \leq C\left( E+m_{0}+1\right)^2 \left(
\left( \frac{m_{0}}{E+m_{0}+1}\right) e^{C_{\star}\rho ^{1/(1-\tau )}}+\rho ^{-
\frac{2-s}{2}}\right),
\end{eqnarray}
where $C_\star$ depends on the a priori data only.

We then choose
\[
   \rho = \left(\frac{1}{2C_{\star}}
        \left\vert \log \frac{m_{0}}{E+m_{0}+1} \right\vert
                     \right)^{1-\tau }
\]
so that
\begin{equation}\label{81}
   \int_{B_{d}^{\prime}} \vert f_{1} \vert ^{2} d\sigma_{1}
   \leq C  \left(E + m_{0} + 1\right)^2
         \left\vert \log \frac{m_{0}}{E + m_{0} + 1}
                            \right\vert^{-\frac{2-s}{2}},
\end{equation}
where $C$ depends on $s$ and the a priori information only. Because
$f_1$ is an affine function on $F_1$ with a bounded gradient, and the
size of $B_{d}^{\prime }$ is bounded from below with a constant
depending only on a priori information, we have
\begin{equation}\label{82}
\left \vert f_{1}(x)\right \vert\leq C\left( E+m_{0}+1\right) \left \vert \log \frac{m_{0}}{E+m_{0}+1}
\right \vert ^{-\frac{2-s}{4}}\text{ for every }x\in F_1.
\end{equation}
By repeating the same procedure on each facet, and recalling
\eqref{contr} and the fact that $\tilde{\Phi}_k(x)\cdot \nu_k=0$ if
$F_k \subset \der\om$, we have
\begin{equation}\label{G1}
    \left\vert\tilde{\Phi}_k(x)\cdot \nu_k\right\vert\leq C\varsigma_1(m_0) \text{ for }x\in F_k,
\end{equation}
where
\[
\varsigma_1(m_0)=\left( E+m_{0}+1\right)\left \vert \log \frac{m_{0}}{E+m_{0}+1}
\right \vert ^{-\frac{2-s}{4}}.
\]
We fix a tetrahedron $T_j^{(0)}$ and let $P_1$, $P_2$, $P_3$, $P_4$ be
its vertices. We label the facets so that $F_k$, for $k=1,2,3,4$ is the
facet of $T_j^{(0)}$ that does not contain $P_k$. We let $\nu^{(k)}$ be
the unit outward normal to $F_k$.  Each point on $x\in F_k$ can be
written as
\[
   x = \sum_{i=1}^4 s_i P_i,
\]
where $0\leq s_i\leq 1$, $s_k=0$, and $\sum_{i=1}^4s_i=1$.  With this
notation,
\[
   \tilde{\Phi}_k(x) \cdot \nu_k
           = \sum_{i=1}^4 s_i \tilde{v}_i \cdot \nu^{(k)}
\]
and using \eqref{G1}
\[
   \left\vert \sum_{i=1}^4s_i\tilde{v}_i\cdot \nu^{(k)} \right\vert\leq C\varsigma_1(m_0).
\]
This implies that
\[
   \left\vert \tilde{v}_i\cdot \nu^{(k)} \right\vert
         \leq  C\varsigma_1(m_0)\text{ for every }i\neq k.
\]

In particular, this means that for every vector $\tilde{v}_{j,i}$ we
have
\[
   \left\vert \tilde{v}_{j,i} \cdot \nu^{(k)} \right\vert
            \leq C \varsigma_1(m_0)
\]
for every direction $\nu_{j}^{(k)}$ orthogonal to the facet of
$T_j^{(0)}$ that contains $P_i$. By the regularity of the partition,
this implies that
\begin{equation}\label{G2}
   \left\vert \tilde{v}_{j,i} \right\vert
                  \leq C_3 \varsigma_1(m_0),
\end{equation}
where $C_3$ depends on the a priori information.

By adding together inequalities \eqref{G2} and applying
\eqref{normalizz}, we get
\[1=\sum_{j=1}^N\sum_{i=1}^4\left\vert\tilde{v}_{j,i}\right\vert\leq 4C_3\varsigma_1(m_0)\]
that yields
\begin{equation}\label{G3}
    m_0\geq \varsigma_1^{-1}\left(\frac{1}{4C_3}\right).
\end{equation}
From the definition of $m_0$ (see \eqref{m0}), there exist a pair of
boundary values $\phi_0$ and $\psi_0$ such that
\[\left\vert\mathcal{G}(\phi_0,\psi_0)\right\vert\geq \frac{m_0}{2}\|\phi_0\|\|\psi_0\|\]
and, hence,
\[\left|\frac{d}{dt}\mathcal{F}(t,\phi_0,\psi_0)_{|_{t=0}}\right\vert\geq \omega^2\sum_{j=1}^NV_j\frac{m_0}{2}\|\phi_0\|\|\psi_0\|\]
that, together with \eqref{s1}, gives \eqref{thirdstep} for
\[m_1=\frac{1}{2}\omun A_2^{-1}\varsigma_1^{-1}\left(\frac{1}{4C_3}\right).
\]

\bibliography{deformation}

\begin{thebibliography}{10}

\bibitem{A-personal-communication}
G.~Alessandrini.
\newblock Personal communication.

\bibitem{A}
G.~Alessandrini.
\newblock Stable determination of conductivity by boundary measurements.
\newblock {\em Appl. Anal.}, 27:153--172, 1988.

\bibitem{AV}
G.~Alessandrini and S.~Vessella.
\newblock Lipschitz stability for the inverse conductivity problem.
\newblock {\em Adv. in Appl. Math.}, 35:207--241, 2005.

\bibitem{BdHQ}
E.~Beretta, M.~de~Hoop, and L.~Qiu.
\newblock Lipschitz stability of an inverse boundary value problem for a
  {S}chr{\"{o}}dinger type equation.
\newblock {\em SIAM J. Math. Anal.}, 45(2):679--699, 2013.

\bibitem{BdHQS}
E.~Beretta, M.~de~Hoop, L.~Qiu, and O.~Scherzer.
\newblock Inverse boundary value problem for the {H}elmholtz equation:
  {M}ultilevel approach and interative reconstruction.
\newblock {\em ArXiv.org/pdf/1406.2391.pdf}, 2014.

\bibitem{BF}
E.~Beretta and E.~Francini.
\newblock Lipschitz stability for the impedance tomography problem. {T}he
  complex case.
\newblock {\em Comm. PDE}, 36:1723--1749, 2011.

\bibitem{dHQS1}
M.~de~Hoop, L.~Qiu, and O.~Scherzer.
\newblock Local analysis of inverse problems: {H}{\"{o}}lder stability and
  iterative reconstruction.
\newblock {\em Inverse Problems}, 28, 2012.

\bibitem{dHQS2}
M.~de~Hoop, L.~Qiu, and O.~Scherzer.
\newblock A convergence analysis of a multi-level projected steepest descent
  iteration for nonlinear inverse problems in banach spaces subject to
  stability constraints.
\newblock {\em Numerische Mathematik}, in print, 2014.

\bibitem{H}
P.~H{\"{a}}hner.
\newblock A periodic faddeev-type solution operator.
\newblock {\em J. Differential Equations}, 128(1):300--308, 1996.

\bibitem{Ho}
{L. H\"{o}rmander}.
\newblock {\em The Analysis of Linear Partial Differential Operators I}.
\newblock Springer-Verlag, 1983.

\bibitem{MP}
Magnanini and Papi.
\newblock An inverse problem for the {H}elmholtz equation.
\newblock {\em Inverse Problems}, 1(4):357--370, 1985.

\bibitem{M}
N.~Mandache.
\newblock Exponential instability in an inverse problem for the
  {S}chr{\"{o}}dinger equation.
\newblock {\em Inverse Problems}, 17:1435--1444, 2001.

\bibitem{Nov}
R.~Novikov.
\newblock New global stability estimates for the {G}el'fand-{C}alder\'{o}n
  inverse problem.
\newblock {\em Inverse Problems}, 27(1), 2011.

\bibitem{Pratt1999}
R.~G. Pratt.
\newblock Seismic waveform inversion in the frequency domain, part 1: Theory
  and verification in a physical scale model.
\newblock {\em Geophysics}, 64(3):888--901, 1999.

\bibitem{Pratt1998}
R.~G. Pratt, C.~Shin, and G.~Hicks.
\newblock Gauss-{N}ewton and full {N}ewton methods in frequency-space seismic
  waveform inversion.
\newblock {\em Geophysical Journal International}, 133(2):341--362, 1998.

\bibitem{RugerHaleGeophysics:2006}
A.~R{\"{u}}ger and D.~Hale.
\newblock Meshing for velocity modeling and ray tracing in comples velocity
  fields.
\newblock {\em Geophysics}, 71(1):U1--U11, 2006.

\bibitem{S}
M.~Salo.
\newblock Lecture notes on the {C}alder\`{o}n problem.
\newblock {\em unpublished ed.}

\bibitem{Hilst}
F.~Simons, A.~Zielhuis, and R.~Van~der Hilst.
\newblock The deep structure of the {A}ustralian continent inferred from
  surface wave tomography.
\newblock {\em Lithos}, 48(1-4):17--43, 1999.

\bibitem{Pratt2004}
L.~Sirgue and R.~G. Pratt.
\newblock Efficient waveform inversion and imaging : A strategy for selecting
  temporal frequencies.
\newblock {\em Geophysics}, 69(1):231--248, 2004.

\bibitem{SU}
J.~Sylvester and G.~Uhlmann.
\newblock A global uniqueness theorem for an inverse boundary value problem.
\newblock {\em Ann. of Math. (2)}, 125(1):153--169, 1987.

\bibitem{Virieux2009}
J.~Virieux and S.~Operto.
\newblock An overview of full-waveform inversion in exploration geophysics.
\newblock {\em Geophysics}, 74:WCC1--WCC26, 2009.

\end{thebibliography}
\bibliographystyle{abbrv}

\end{document}